\pgfplotsset{width=10cm,compat=1.9}
\newtheorem{theorem}{Theorem}
\newtheorem{lemma}[theorem]{Lemma}
\newdefinition{remark}{Remark}
\newtheorem{asp}[theorem]{Assumption}
\newproof{pf}{Proof}
\DeclareMathOperator{\Diag}{Diag}
\DeclareMathOperator{\subjectto}{subject~to}
\newcommand{\minimize}{\mathop{\rm minimize}\limits}
\newcommand{\maximize}{\mathop{\rm maximize}\limits}
\journal{Elsevier Journal}
\begin{document}

\begin{frontmatter}



\title{Cardinality-constrained Distributionally Robust Portfolio Optimization}


\author[kk]{Ken Kobayashi\corref{cor1}}
\cortext[cor1]{Corresponding author}
\ead{kobayashi.k.ar@m.titech.ac.jp}
\author[yt]{Yuichi Takano}
\author[kk]{Kazuhide Nakata}

\address[kk]{Department of Industrial Engineering and Economics, School of Engineering, \\ Tokyo Institute of Technology, 2-12-1 Ookayama, Meguro-ku, Tokyo 152-8552, Japan}
\address[yt]{Institute of Systems and Information Engineering, University of Tsukuba, \\ 1-1-1 Tennodai, Tsukuba-shi, Ibaraki 305-8573, Japan}

\begin{abstract} %
This paper studies a distributionally robust portfolio optimization model with a cardinality constraint for limiting the number of invested assets. 
We formulate this model as a mixed-integer semidefinite optimization (MISDO) problem by means of the moment-based ambiguity set of probability distributions of asset returns. 
To exactly solve large-scale problems, we propose a specialized cutting-plane algorithm that is based on bilevel optimization reformulation. 
We prove the finite convergence of the algorithm. 
We also apply a matrix completion technique to lower-level SDO problems to make their problem sizes much smaller. 
Numerical experiments demonstrate that our cutting-plane algorithm is significantly faster than the state-of-the-art MISDO solver SCIP-SDP. 
We also show that our portfolio optimization model can achieve good investment performance compared with the conventional robust optimization model based on the ellipsoidal uncertainty set. 
\end{abstract}

\begin{keyword}
 Portfolio optimization\sep Mixed-integer semidefinite optimization \sep Distributionally robust optimization\sep 
 Cutting-plane algorithm \sep Matrix completion

\end{keyword}

\end{frontmatter}

\section{Introduction}\label{sec:intro}
Portfolio optimization models, originating from the mean-variance portfolio selection pioneered by \citet{Markowitz1952}, have been actively studied by academic researchers and institutional investors. 
In real-world situations of portfolio optimization, we must depend on inaccurate estimates of asset returns, which can lead to lower investment performance. 
Accordingly, robust optimization~\citep{BenTal2002,Bental2009}, which copes with uncertainty about input data, has played an important role in practical portfolio optimization~\citep{fabozzi2010robust,Gregory2011}.
This paper focuses on the distributionally robust optimization approach to portfolio optimization. 

In distributionally robust optimization, optimal solutions are evaluated under the worst-case expectation with respect to a set of probability distributions of uncertain parameters. 
This optimization model was first introduced by \citet{Scarf1958} for an inventory problem. 
Distributionally robust optimization can be viewed as a framework unifying traditional stochastic optimization based on sample average approximation~\citep{Kleywegt2002,Shapiro2003,Shapiro2009} and conventional robust optimization based on uncertainty sets of possible realizations of random variables~\citep{BenTal2002,Bental2009}. 
Theoretical analyses also support the effectiveness of distributionally robust optimization in data-driven decision-making problems under uncertainty~\citep{Gotoh2021,Parys2020}.

Moment-based ambiguity sets, which contain probability distributions of asset returns whose moments satisfy certain conditions, are commonly adopted in distributionally robust portfolio optimization. 
\citet{Ghaoui2003} considered the worst-case value-at-risk over an ambiguity set of probability distributions such that the mean and covariance matrix are elementwise bounded. 
\citet{Ttnc2004} used a similar ambiguity set for solving robust mean-variance portfolio optimization problems. 
\citet{GoIy03} studied a distributionally robust portfolio optimization problem in which asset returns are formed using a linear factor model.
\citet{Popescu2007} assumed that the mean and covariance matrix are exactly known and used an ambiguity set of probability distributions whose first and second moments are equal to them, respectively. 
Similar assumptions were also made in other distributionally robust optimization models~\citep{Natarajan2010,Zymler2011}. 
\citet{Goh2010} studied an ambiguity set of probability distributions whose mean belongs to a convex set and covariance matrix is given. 
As an extension of the elementwise ambiguity sets~\citep{Ghaoui2003,Ttnc2004}, 
  an ellipsoidal ambiguity set for the tuple of the mean and covariance matrix was also used in minimizing worst-case value-at-risk and conditional value-at-risk~\citep{Lotfi2016,Lotfi2018} (see \citet{Rahimian19} for a comprehensive survey on distributionally robust optimization models). 

Unlike these models that require prior knowledge of the region including the mean and covariance matrix, \citet{Delage2010} proposed a data-driven method for defining a moment-based ambiguity set and gave its probabilistic guarantee. 
They also formulated the worst-case loss minimization problem based on this ambiguity set as a semidefinite optimization (SDO) problem, which can be solved in polynomial time with interior-point methods~\citep{Nesterov1994}.  
\citet{Bertsimas2017} proposed a practical bootstrap method to calibrate an ambiguity set of probability distributions from historical data. 

We focus on the distributionally robust portfolio optimization model that is based on the moment-based ambiguity set developed by \citet{Delage2010} with a cardinality constraint for limiting the number of invested assets. 
When the number of invested assets is large, it is difficult for investors to keep track of each asset, and substantial transaction costs are also required~\citep{Mansini2014,Perold1984}. 
Thus, there is a need to control the number of invested assets with the cardinality constraint. 
Due to the moment-based ambiguity set and cardinality constraint, our portfolio optimization model is formulated as a mixed-integer semidefinite optimization (MISDO) problem, which is a computationally challenging optimization problem. 

Conventional methods for solving MISDO problems are mainly classified into two categories: branch-and-bound and cutting-plane algorithms.
Regarding branch-and-bound algorithms, \citet{Gally2017} developed a general-purpose MISDO solver called SCIP-SDP, which combines the branch-and-bound framework of SCIP~\citep{Achterberg2009} with SDO solvers that use interior-point methods. 
Branch-and-bound algorithms have been used for specific applications of MISDO models~\citep{ArFu12,CeAg13,YoKa10}. 
Regarding cutting-plane algorithms, \citet{Coey2020} developed a general-purpose solver called Pajarito for mixed-integer conic optimization problems. 
Cutting-plane algorithms for solving MISDO problems have been applied to eliminating multicollinearity from linear regression models~\citep{Tamura2017} and allocating surgery blocks to operating rooms~\citep{ZhSh18}. 
\citet{Kobayashi2019} devised a branch-and-cut algorithm for solving MISDO problems. 
As shown above, several algorithms have been proposed to solve MISDO problems; however, a specialized algorithm is required to solve large-scale portfolio optimization problems.  

\citet{Bertsimas2021unified} proposed a general framework of cutting-plane algorithms to exactly solve mixed-integer convex optimization problems with logical constraints. 
They reformulated this problem as a bilevel optimization problem composed of lower- and upper-level problems. 
To solve the upper-level problem, they devised a cutting-plane algorithm, which iteratively approximates the objective function by generating cutting planes from the solution to the lower-level problem on the basis of the strong duality theory. 
\citet{Bertsimas2018} applied this algorithm to cardinality-constrained mean-variance portfolio optimization. 
They demonstrated that their algorithm was much faster than state-of-the-art MIO methods when solving large-scale problem instances.
\citet{Kobayashi2021} developed a bilevel cutting-plane algorithm to solve cardinality-constrained mean-CVaR portfolio optimization problems. 
The cutting-plane algorithm~\citep{Bertsimas2021unified} was also applied to sparse principal component analysis~\citep{Bertsimas2020pca} and sparse inverse covariance estimation~\citep{Bertsimas2019covariance}. 

Motivated by these prior studies, we propose a specialized cutting-plane algorithm to exactly solve the cardinality-constrained distributionally robust portfolio optimization problem. 
Each iteration of this cutting-plane algorithm solves an SDO problem, which is the dual of the lower-level problem for generating cutting planes. 
However, the size of this SDO problem depends on the number of all investable assets, so solving it at each iteration is computationally prohibitive when handling a large number of assets. 
To overcome this difficulty, we reduce the size of the lower-level SDO problem through the application of the matrix completion technique~\citep{Fukuda2001,Nakata2003}. 
Notably, the size of the reduced SDO problem depends not on the number of all investable assets but on the cardinality parameter, which is usually set to a small number. 
\textcolor{black}{
As a result, we can efficiently generate cutting planes even when the number of investable assets is very large. 
}
To the best of our knowledge, we are the first to develop an effective algorithm for exactly solving the cardinality-constrained distributionally robust portfolio optimization problem. 
Numerical experiments using real-world datasets 
demonstrate the effectiveness of our method in terms of both computational efficiency and out-of-sample investment performance. 

The main contributions of the present paper are summarized as follows:
\begin{itemize}
    \item We formulate the cardinality-constrained distributionally robust portfolio optimization model with the moment-based ambiguity set as an MISDO problem. 
    \item We develop the cutting-plane algorithm for solving the cardinality-constrained distributionally robust portfolio optimization problem. 
    We also prove that our algorithm outputs a solution with guaranteed global optimality in a finite number of iterations.
    \item \textcolor{black}{By reducing the size of lower-level SDO problems with the matrix completion technique~\citep{Fukuda2001,Nakata2003}, we can efficiently generate cutting planes regardless of the number of all investable assets.
    Numerical results indicate that our algorithm with this acceleration technique is faster than the state-of-the-art MISDO solver SCIP-SDP~\citep{Gally2017}.} 
    \item \textcolor{black}{We demonstrate through numerical experiments that the cardinality constraint can improve the out-of-sample investment performance of the distributionally robust portfolio optimization model. Moreover, our portfolio optimization model is superior in terms of the out-of-sample investment performance to the conventional robust portfolio optimization model with an ellipsoidal uncertainty set of asset returns~\citep{Ben_Tal_1999}.}
\end{itemize}

The remainder of this paper is organized as follows. 
In \Cref{sec:formulation}, we give an MISDO formulation of the cardinality-constrained distributionally robust portfolio optimization model.
In \Cref{sec:cpa}, we present our cutting-plane algorithm for solving the problem.
In \Cref{sec:matrix_completion}, we describe the reduction of the  lower-level SDO problem. 
We report the numerical results in \Cref{sec:experiments} and conclude in \Cref{sec:concl}.

\paragraph*{Notation} 
The set of consecutive integers ranging from $1$ to $N$ is denoted as $[N]\coloneqq\{1,2,\ldots,N\}.$ 
The zero and all-ones vectors of appropriate sizes are written as $\bm 0$ and $\bm 1$, respectively. 
The zero and identity matrices of appropriate sizes are written as $\bm O$ and $\bm I$, respectively. 
The $\Diag(\cdot)$ operator maps the vector to the diagonal matrix. 
The set of all $N\times N$ real symmetric matrices is denoted as $\mathcal{S}^N$. 
For $\bm X, \bm Y \in \mathcal{S}^N$, we write $\bm X \succ \bm Y$ and $\bm X \succeq \bm Y$ if the matrix $\bm X- \bm Y$ is positive definite and positive semidefinite, respectively. 
The standard inner product of matrices $\bm A\coloneqq(A_{nm})\in \mathcal{S}^N$ and $\bm B\coloneqq(B_{nm})\in \mathcal{S}^N$ is defined as $\bm A\bullet \bm B\coloneqq\sum_{n\in [N]}\sum_{m\in [N]}A_{nm}B_{nm}.$ 
The Hadamard product of vectors $\bm a\coloneqq (a_n)\in \mathbb{R}^N$ and $\bm b \coloneqq(b_n)\in \mathbb{R}^N$ is defined as $\bm a \circ \bm b\coloneqq (a_n b_n)\in \mathbb{R}^N$. 

\section{Problem formulation}\label{sec:formulation}
In this section, we formulate the cardinality-constrained distributionally robust portfolio optimization model that we consider in this paper as an MISDO problem. 

\subsection{Cardinality constraint}
Let $\bm x \coloneqq (x_1,x_2,\dots,x_N)^\top$ be a portfolio, where $x_n$ is the investment weight of the $n$th asset. 
Throughout this paper, we consider the set of feasible portfolios:
\begin{equation*}
    \mathcal X \coloneqq \left\{\bm x \in \mathbb R^N\,\middle|\,\sum_{n\in [N]}x_n=1, \quad \bm x \geq 0 \right\}.
\end{equation*}
The nonnegativity constraint on $\bm x$ prohibits short selling. 

Let $k \in [N]$ be a user-defined parameter for limiting the cardinality (i.e., the number of assets to be held). 
We then impose the following cardinality constraint~\citep{Bertsimas1999,Bienstock1996,Perold1984} on portfolio $\bm x$:
\begin{equation}\label{eq:cardinality}
  \|\bm x\|_0 \le k,   
\end{equation}
where $\|\cdot\|_0$ is the $\ell_0$-pseudo-norm (i.e., the number of nonzero entries).
In practice, this constraint is required by investors to reduce their portfolio monitoring and transaction costs. 
\textcolor{black}{The cardinality parameter $k$ can be set according to the acceptable amount of such costs or tuned through the cross-validation of investment performance.}

Let $\bm z \coloneqq (z_1,z_2,\ldots, z_N)^\top$ be a vector composed of binary decision variables for selecting assets, that is, $z_n = 1$ if the $n$th asset is selected, and $z_n=0$ otherwise. 
We also introduce the feasible set corresponding to the cardinality constraint~\eqref{eq:cardinality}: 
\begin{equation*}
\mathcal{Z}_N^k \coloneqq \left\{\bm z\in \{0,1\}^N  ~\middle|~ \sum_{n\in [N]} z_n = k\right\}. 
\end{equation*}
The cardinality constraint \eqref{eq:cardinality} is then represented by the logical implication:
\begin{equation*}
\begin{cases}
    {}
    z_n  = 0 ~\Rightarrow~ x_n = 0 &(\forall n\in [N]), \\
    \bm z \in \mathcal{Z}_N^k.
\end{cases}    
\end{equation*}

\subsection{Moment-based ambiguity set}
We consider a measurable space $(\Omega, \mathcal{F})$. 
Let $\tilde{\bm \xi}: \Omega\to \mathbb{R}^N$ be an $\mathcal{F}$-measurable function ($N$-dimensional random vector) representing the rate of random return of each asset. 
Suppose that $\bm \xi_m \in \mathbb{R}^N$ is an observed historical return for each period $m\in [M]$. 
The sample mean vector and sample covariance matrix are then calculated as
\begin{align}\label{eq:sample_mean_cov}
    \hat{\bm \mu} \coloneqq \frac{1}{M}\sum_{m\in [M]}\bm \xi_m, \qquad
    \hat{\bm \Sigma} \coloneqq \frac{1}{M}\sum_{m\in [M]}(\bm \xi_m-\hat{\bm \mu})(\bm \xi_m-\hat{\bm \mu})^\top.
\end{align}

Let $\mathcal{M}$ be the set of all probability measures in the measurable space $(\Omega, \mathcal{F})$, and $\mathbb{E}_F[\,\cdot\,]$ be the expectation under the probability measure $F \in \mathcal{M}$. 
\citet{Delage2010} considered the moment-based ambiguity set of probability distributions of $\tilde{\bm \xi}$. 
When the support of probability distributions is $\mathbb{R}^N$, this ambiguity set is given by
\begin{equation}
	\mathcal{D}(\hat{\bm \mu} , \hat{\bm \Sigma} , \kappa_1, \kappa_2) \coloneqq \left\{F\in \mathcal{M} \left|
		\begin{array}{l}
		\left(\mathbb{E}_F[\tilde{\bm \xi}]-\hat{\bm \mu} \right)^\top \hat{\bm \Sigma} ^{-1} \left(\mathbb{E}_F[\tilde{\bm \xi}]-\hat{\bm \mu} \right) \leq \kappa_1\\
		\mathbb{E}_F[(\tilde{\bm \xi}-\hat{\bm \mu} )(\tilde{\bm \xi}-\hat{\bm \mu} )^\top] \preceq \kappa_2 \hat{\bm \Sigma} 
		\end{array}
	\right. \right\}, \label{eq:uncertain_set}
\end{equation}
where $\kappa_1\geq 0$ and $\kappa_2\geq 1$ are user-defined ambiguity parameters about $\hat{\bm \mu}$ and $\hat{\bm \Sigma}$, respectively.  
The first condition in Eq.~\eqref{eq:uncertain_set} ensures that the expectation of $\tilde{\bm \xi}$ lies in an ellipsoid of size $\kappa_1$ centered at $\hat{\bm \mu}$.
The second condition in Eq.~\eqref{eq:uncertain_set} indicates that the second central-moment matrix of $\tilde{\bm \xi}$ is bounded above by $\kappa_2  \hat{\bm \Sigma} $ in the sense of the matrix inequality. 

We make the following mild assumption about the moment-based ambiguity set~\eqref{eq:uncertain_set}. 
\begin{asp}\label{asp:1}
$\kappa_1 >0$ and $\hat{\bm \Sigma} \succ \bm O$. 
\end{asp}

\color{black}
\begin{remark}\label{rmk:degenerate}
The sample covariance matrix $\hat{\bm \Sigma}$ can be singular when $M$ is less than $N$ or some asset returns are highly correlated~\citep{Bodnar2016,Gulliksson2019,Pappas2010}. 
In this case, we can use shrinkage estimation methods~\citep{Chen2010,Ledoit2004} to obtain a covariance matrix that satisfies \Cref{asp:1}.
\end{remark}
\color{black}

\subsection{Piecewise-linear utility and loss functions}
Let $\bm \xi \in \mathbb{R}^N$ be a realization of the random return $\tilde{\bm \xi}$.
As in \citet{Delage2010}, we use the following piecewise-linear concave utility function in the portfolio net return $y = \bm \xi^\top \bm x$: 
\begin{equation} \label{eq:utility_func}
	u(y) \coloneqq \min_{\ell \in [L]}\left\{ a^{(\ell)} y + b^{(\ell)}\right\},
\end{equation}
where $a^{(\ell)}, b^{(\ell)} \in \mathbb{R}$ are respectively the slope and intercept of the $\ell$th linear function for $\ell \in [L]$. 
The loss function is then defined as the negative of the utility function~\eqref{eq:utility_func}:
\begin{align*}
    \mathcal{L}(\bm x, \bm \xi) &\coloneqq -\min_{\ell \in [L]} \left\{a^{(\ell)} \bm \xi^\top \bm x +b^{(\ell)}\right\} = \max_{\ell \in [L]} \left\{-a^{(\ell)} \bm \xi^\top \bm x -b^{(\ell)}\right\}.
\end{align*}

\subsection{Portfolio optimization model}
Our objective is to minimize the following worst-case expected loss with respect to an underlying probability distribution $F\in \mathcal D( \hat{\bm{\mu}}, \hat{\bm \Sigma},\kappa_1,\kappa_2)$ of $\tilde{\bm \xi}$: 
\begin{equation}\label{eq:worst_case_loss}
    \max_{F\in \mathcal D( \hat{\bm{\mu}}, \hat{\bm \Sigma},\kappa_1,\kappa_2)} \left\{ \mathbb E_F\left[\mathcal{L}(\bm x,\tilde{\bm \xi})\right] \right\}.
\end{equation}
The $\ell_2$-regularization term $\bm x^\top \bm x$ is also incorporated into the objective from the perspective of robust optimization~\citep{Bertsimas2018,DeMiguel2009,Gotoh2013,Gotoh2011}.

We formulate the cardinality-constrained distributionally robust portfolio optimization model as follows:
\begin{subequations}\label{prob:card_constrained_dist_robust}
\begin{align}
		\underset{\bm x, \bm z}{\minimize} 
		&\quad \frac{1}{2\gamma}\bm x^\top \bm x + 
		\max_{F\in \mathcal D( \hat{\bm{\mu}}, \hat{\bm \Sigma},\kappa_1,\kappa_2)} \left\{ \mathbb E_F\left[\mathcal{L}(\bm x,\tilde{\bm \xi})\right] \right\}  \label{obj}\\
        \subjectto&\quad z_n  = 0 ~\Rightarrow~ x_n = 0 \quad(\forall n\in [N]),\\
        &\quad \bm x\in \mathcal{X},\quad \bm z \in \mathcal{Z}_N^k,    
\end{align}
\end{subequations}
where $\gamma>0$ is a user-defined regularization parameter.
By deriving the dual of the inner maximization problem~\eqref{eq:worst_case_loss} as in \citet{Delage2010}, Problem \eqref{prob:card_constrained_dist_robust} can equivalently be reformulated as the MISDO problem:
\begin{subequations} \label{prob:reg_dist_robust_primal}
\begin{align}
\minimize_{\bm x, \bm P, \bm Q, \bm p, \bm q, r, s, \bm z} &\quad \frac{1}{2\gamma}\bm x^\top \bm x + (\kappa_2 \hat{\bm \Sigma}  - \hat{\bm \mu} \hat{\bm \mu}^\top )\bullet \bm Q + r + \hat{\bm \Sigma} \bullet \bm P - 2 \hat{\bm \mu}^\top \bm p + \kappa_1 s\\
\subjectto &\quad \bm{p} = -\bm q /2 - \bm Q \hat{\bm{\mu}},\\
        &\quad 
        \begin{pmatrix}
        \bm Q & \bm q/2+a^{(\ell)}\bm x/2\\
        (\bm q/2+a^{(\ell)}\bm x/2)^\top &r+ b^{(\ell)}
        \end{pmatrix}\succeq \bm O \quad(\forall \ell \in [L]),\\
            &\quad \begin{pmatrix}
            \bm P &\bm p\\
            \bm p^\top &s
            \end{pmatrix}\succeq \bm O,\\
        &\quad z_n  = 0 ~\Rightarrow~ x_n = 0 
        \quad(\forall n\in [N]),\label{const:logic}\\
        &\quad \bm x\in \mathcal{X},\quad \bm z \in \mathcal{Z}_N^k,
\end{align}
\end{subequations}
where $\bm P, \bm Q \in \mathcal S^N$, $\bm p, \bm q \in \mathbb R^N$, and $r, s \in \mathbb R$ are dual decision variables of the inner maximization problem~\eqref{eq:worst_case_loss}. 

\section{Cutting-plane algorithm}\label{sec:cpa}
In this section, we present our cutting-plane algorithm for solving the cardinality-constrained distributionally robust portfolio optimization problem \eqref{prob:reg_dist_robust_primal}.

\subsection{Bilevel optimization reformulation} 
We extend the method of bilevel optimization reformulation~\citep{Bertsimas2018} to our portfolio optimization problem \eqref{prob:reg_dist_robust_primal}.
Let us denote by $\bm Z\coloneqq \Diag(\bm z)$ a diagonal matrix whose diagonal entries are given by $\bm z$. 
The logical implication \eqref{const:logic} then amounts to replacing $\bm x$ with $\bm Z \bm x$ in Problem \eqref{prob:reg_dist_robust_primal}. 
We now can reformulate Problem \eqref{prob:reg_dist_robust_primal} as a bilevel optimization problem. 
Specifically, the \emph{upper-level problem} is written as the integer optimization problem: 
\begin{equation}\label{prob:master}
    \minimize_{\bm z}\quad f(\bm z)\quad \subjectto \quad \bm z\in \mathcal{Z}_N^k,\end{equation}
and the \emph{lower-level problem} for calculating the objective function is expressed as the semidefinite optimization problem:
\begin{subequations} \label{prob:lower_primal}
\begin{align}
f(\bm z) = \minimize_{\bm x, \bm P, \bm Q, \bm p, \bm q, r, s} &\quad \frac{1}{2\gamma} \bm x^\top \bm x + (\kappa_2 \hat{\bm \Sigma}  - \hat{\bm \mu} \hat{\bm \mu}^\top )\bullet \bm Q + r + \hat{\bm \Sigma} \bullet \bm P - 2 \hat{\bm \mu}^\top \bm p + \kappa_1 s\label{eq:lower_primal_obj}\\
\subjectto &\quad \bm{p} = -\bm q /2 - \bm Q \hat{\bm{\mu}},\\
        &\quad 
        \begin{pmatrix}
        \bm Q & \bm q/2+a^{(\ell)}\bm Z\bm x/2\\
        (\bm q/2+a^{(\ell)}\bm Z\bm x/2)^\top &r+ b^{(\ell)}
        \end{pmatrix}\succeq \bm O \quad(\forall \ell \in [L]), \label{con2:lower_primal}\\
            &\quad \begin{pmatrix}
            \bm P &\bm p\\
            \bm p^\top &s
            \end{pmatrix}\succeq \bm O, \label{con3:lower_primal}\\
        &\quad \bm Z\bm x\in \mathcal{X}.    
\end{align}
\end{subequations}
Note that $(\bm Z\bm x)^\top \bm Z\bm x$ has been replaced by $\bm x^\top \bm x$ in Eq.~\eqref{eq:lower_primal_obj} because $\bm x=\bm Z\bm x$ holds through minimizing $\bm x^\top \bm x$. 

The following theorem yields the dual formulation of Problem \eqref{prob:lower_primal}. 
\begin{theorem}\label{thm:f_dual}
For all $\bm z\in \mathcal{Z}_N^k$, the strong duality holds for Problem \eqref{prob:lower_primal}, and the dual formulation of Problem \eqref{prob:lower_primal} is represented as follows:
\begin{subequations}\label{prob:lower_dual}
\begin{align}
f(\bm z) = \maximize_{\bm \omega, \bm B, \bm \beta, \bm \eta,\bm \lambda,\pi} &\quad -\frac{\gamma}{2} \bm z^\top (\bm \omega\circ \bm \omega) -\sum_{\ell\in [L]} \eta^{(\ell)} b^{(\ell)} + \pi \label{eq:lower_dual_obj}\\
\subjectto & \quad \bm \omega \geq \sum_{\ell\in [L]}a^{(\ell)} \bm \beta^{(\ell)} +\pi\bm 1,\label{dual_const_ineq:1}\\
&\quad \sum_{\ell\in [L]}\bm B^{(\ell)}=\kappa_2 \hat{\bm \Sigma}  -\hat{\bm \mu}\hat{\bm \mu}^\top+ \hat{\bm \mu}\left(\sum_{\ell\in [L]}\bm \beta^{(\ell)}\right)^\top+\left(\sum_{\ell\in [L]}\bm \beta^{(\ell)}\right)\hat{\bm \mu}^\top,\label{dual_const_eq:1}\\
&\quad \sum_{\ell\in [L]}\bm \beta^{(\ell)} = \bm \lambda+\hat{\bm \mu},\label{dual_const_eq:2}\\
&\quad  1 - \sum_{\ell\in [L]}\eta^{(\ell)} = 0,\label{dual_const_eq:3}\\
&\quad  \begin{pmatrix}
     \bm B^{(\ell)} &\bm \beta^{(\ell)}\\
     (\bm \beta^{(\ell)})^{\top} & \eta^{(\ell)}
    \end{pmatrix}\succeq \bm O\quad (\forall \ell \in [L]),\label{dual_const_sd:1}\\
&\quad \begin{pmatrix}
     \hat{\bm \Sigma} &\bm \lambda\\
     \bm \lambda^\top & \kappa_1
    \end{pmatrix}\succeq \bm O\label{dual_const_sd:2},
\end{align}
\end{subequations}
where $\bm \omega\in \mathbb{R}^N$, $\bm B \coloneqq (\bm B^{(\ell)}) \in \mathbb{R}^{N \times N \times L}$, 
$\bm \beta \coloneqq (\bm \beta^{(\ell)}) \in \mathbb{R}^{N \times L}$, 
$\bm \eta\coloneqq(\eta^{(\ell)}) \in \mathbb{R}^L$, $\bm \lambda \in \mathbb{R}^N$, and $\pi\in \mathbb{R}$ are dual decision variables. 
\end{theorem}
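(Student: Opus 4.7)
The plan is standard: I will first verify Slater's condition for the primal~\eqref{prob:lower_primal} to guarantee strong duality, then derive the dual via Lagrangian calculus and match the result to \eqref{prob:lower_dual}.

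For Slater's condition, fix $\bm z \in \mathcal{Z}_N^k$. By \Cref{asp:1}, $\hat{\bm \Sigma} \succ \bm O$ and $\kappa_1 > 0$. A strictly feasible primal point can be constructed by taking $x_n = 1/k$ whenever $z_n = 1$ so that $\bm Z \bm x \in \mathrm{relint}(\mathcal{X})$, setting $\bm q = \bm 0$, $\bm Q = \rho \bm I$, $\bm p = -\bm Q \hat{\bm \mu}$, $\bm P = \tau \bm I$, and choosing $r, s$ large; for $\rho, \tau, r, s$ sufficiently large, a Schur-complement check shows that both LMIs \eqref{con2:lower_primal} and \eqref{con3:lower_primal} are strict, while the linear constraints are satisfied by construction. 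Slater's condition therefore holds, and strong duality applies.

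To derive the dual, I attach $\bm \nu \in \mathbb{R}^N$ to the equality $\bm p + \bm q/2 + \bm Q \hat{\bm \mu} = \bm 0$, a PSD block dual matrix with blocks $\bm B^{(\ell)}, \bm \beta^{(\ell)}, \eta^{(\ell)}$ (arranged as in the primal LMI) to each $\ell$th constraint in \eqref{con2:lower_primal}, an analogous block $\bm B_0, \bm \beta_0, \eta_0$ to \eqref{con3:lower_primal}, a scalar $\pi$ to $\bm 1^\top \bm Z \bm x = 1$, and $\bm \omega \geq \bm 0$ to $\bm Z \bm x \geq \bm 0$. Writing out the Lagrangian and computing stationarity: the derivatives in $r$, $s$, and $\bm P$ yield $\sum_\ell \eta^{(\ell)} = 1$, $\eta_0 = \kappa_1$, and $\bm B_0 = \hat{\bm \Sigma}$; the derivatives in $\bm p$ and $\bm q$ yield $\bm \nu = 2\hat{\bm \mu} + 2\bm \beta_0$ and $\sum_\ell \bm \beta^{(\ell)} = \hat{\bm \mu} + \bm \beta_0$. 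Renaming $\bm \lambda := \bm \beta_0$ then recovers \eqref{dual_const_eq:2}, \eqref{dual_const_eq:3}, and \eqref{dual_const_sd:2}; the derivative in the symmetric variable $\bm Q$, with $\bm \nu$ eliminated, reproduces \eqref{dual_const_eq:1}.

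Finally, the $\bm x$-part of the Lagrangian is the quadratic $\frac{1}{2\gamma}\bm x^\top \bm x - \bm \theta^\top \bm Z \bm x$ with $\bm \theta := \sum_\ell a^{(\ell)} \bm \beta^{(\ell)} + \bm \omega + \pi \bm 1$, and since $\bm Z^2 = \bm Z$, its unconstrained minimum in $\bm x$ equals $-\frac{\gamma}{2}\bm z^\top (\bm \theta \circ \bm \theta)$. Reparametrizing the dual by substituting $\bm \omega' := \bm \theta$, the nonnegativity $\bm \omega \geq \bm 0$ becomes $\bm \omega' \geq \sum_\ell a^{(\ell)} \bm \beta^{(\ell)} + \pi \bm 1$, which is exactly \eqref{dual_const_ineq:1}; collecting the remaining constants $\pi$ and $-\sum_\ell \eta^{(\ell)} b^{(\ell)}$ then produces \eqref{eq:lower_dual_obj}. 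The main bookkeeping obstacle will be the $\bm Q$-stationarity, where the symmetrized gradient contribution $\tfrac{1}{2}(\bm \nu \hat{\bm \mu}^\top + \hat{\bm \mu} \bm \nu^\top)$ must combine cleanly with the relation $\sum_\ell \bm \beta^{(\ell)} = \hat{\bm \mu} + \bm \lambda$ to produce exactly the rank-two correction appearing in \eqref{dual_const_eq:1}; everything else is routine.
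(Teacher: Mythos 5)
Your proposal follows essentially the same route as the paper's proof: the same Lagrangian with the same multiplier structure, the same stationarity conditions (in $\bm P,\bm Q,\bm p,\bm q,r,s,\bm x$), and the same reparametrization of the nonnegativity multiplier into $\bm \omega = \sum_{\ell\in[L]}a^{(\ell)}\bm \beta^{(\ell)}+\pi\bm 1+\bm \rho$, together with a Slater-point construction that mirrors the paper's. The only substantive difference is that the paper also exhibits a strictly feasible \emph{dual} point (using \Cref{asp:1}), which simultaneously certifies that the primal value is finite and that both optima are attained; since you invoke only primal Slater, you should add a one-line argument that the primal infimum is finite (e.g., by exhibiting a single dual-feasible point and appealing to weak duality) before concluding strong duality.
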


\begin{proof}
See \ref{sec:proof_f_dual}.
\end{proof}
In accordance with \Cref{thm:f_dual}, we can extend the definition of $f(\bm z)$ to the optimal objective value of Problem \eqref{prob:lower_dual} for real-valued $\bm z \in [0,1]^N.$ 
As in~\citet{Bertsimas2018}, the function $f(\bm z)$ is convex in $\bm z\in [0,1]^N$, and its subgradient for $\bm z\in \{0,1\}^N$ is given by 
    \begin{equation}\label{eq:subgradient}
        \bm g (\bm z) \coloneqq -\frac{\gamma}{2}\bm \omega^\star(\bm z)\circ\bm \omega^\star(\bm z) \in \partial f(\bm z), 
    \end{equation}
where $\bm \omega^\star(\bm z)$ is an optimal solution of $\bm{\omega}$ to Problem~\eqref{prob:lower_dual}. 
Therefore, Problem~\eqref{prob:lower_dual} with $\hat{\bm z} \in \{0,1\}^N$ provides the following linear underestimator of $f(\bm z)$ for $\bm{z} \in [0,1]^N$: 
\begin{equation}\label{eq:cut}
    f(\bm z) \geq f(\hat{\bm z}) +  \bm g(\hat{\bm z})^\top (\bm z-\hat{\bm z}).
\end{equation}

\subsection{Algorithm description}
We extend the cutting-plane algorithm~\citep{Bertsimas2018} with the aim of solving the upper-level problem~\eqref{prob:master}. 
Let $\theta_{\text{LB}}$ be a lower bound of the optimal objective value of Problem~\eqref{prob:master}; this bound can easily be calculated by solving a continuous relaxation version of Problem~\eqref{prob:reg_dist_robust_primal}. 
Our cutting-plane algorithm starts with the initial feasible region: 
\begin{equation}\label{eq:initial_relax_region}
    \mathcal{F}_1 \coloneqq \{(\bm z,\theta)\in  \mathcal{Z}_N^k\times \mathbb{R}\mid \theta \geq \theta_{\text{LB}}\},
\end{equation}
where $\theta$ is an auxiliary decision variable that corresponds to a lower estimate of $f(\bm z)$. 

At the $t$th iteration~($t\geq 1$), our algorithm solves the surrogate upper-level problem:
\begin{equation}\label{prob:master_relax}
    \minimize_{\bm z, \theta}~\theta\quad \subjectto~(\bm z,\theta) \in \mathcal{F}_t,
\end{equation}
where $\mathcal{F}_t$ is a feasible region at the $t$th iteration such that $\mathcal{F}_t \subseteq \mathcal{F}_1$.
Eq.~\eqref{eq:initial_relax_region} ensures that the objective value of Problem \eqref{prob:master_relax} is bounded below; thus, there exists an optimal solution $(\bm z_t,\theta_t)$ to Problem \eqref{prob:master_relax}. 

We next solve the dual lower-level problem \eqref{prob:lower_dual} with $\bm z = \bm z_t$. 
We thus obtain the function value $f(\bm z_t)$ and its subgradient $\bm g(\bm z_t)$ through Eq.~\eqref{eq:subgradient}. 
If $f(\bm z_t) \le \theta_t + \varepsilon$ holds with sufficiently small $\varepsilon \ge 0$, then $\bm z_t$ is an $\varepsilon$-optimal solution to Problem~\eqref{prob:master}, which means that
\begin{equation*}
    f^\star \le f(\bm z_t) \le f^\star + \varepsilon,
\end{equation*}
where $f^\star$ is the optimal objective value of Problem~\eqref{prob:master}.
In this case, we terminate the algorithm with the $\varepsilon$-optimal solution $\bm z_t$.
Otherwise, to obtain a closer estimate of $f(\bm z)$, we add the constraint~\eqref{eq:cut} with $\hat{\bm z} = \bm z_t$ to the feasible region: 
\begin{equation}\label{eq:cut_subgrad}
    \mathcal{F}_{t+1} \leftarrow \mathcal{F}_t \cap \{(\bm z,\theta)\in \mathcal{Z}_N^k\times \mathbb{R}\mid \theta \geq f(\bm z_t) + \bm g(\bm z_t)^\top (\bm z - \bm z_t)\}.
\end{equation}
Note that this update cuts off the solution $(\bm z_t,\theta_t)$ because $\theta_t < f(\bm z_t)$.

We then set $t \leftarrow t + 1$ and solve the surrogate upper-level problem~\eqref{prob:master_relax} again with the updated feasible region~\eqref{eq:cut_subgrad}. 
We repeat this procedure until we find an $\varepsilon$-optimal solution~$\hat{\bm z}$.
After termination of the algorithm, we can compute the corresponding portfolio by solving the lower-level problem~\eqref{prob:lower_primal} with $\bm z = \hat{\bm z}$. 

Our cutting-plane algorithm is summarized in Algorithm \ref{alg:upper_level_cpa}.
Following \citet{Kobayashi2021}, we can prove the finite convergence of the algorithm.
\begin{theorem}
\Cref{alg:upper_level_cpa} terminates in a finite number of iterations and outputs an $\varepsilon$-optimal solution to Problem~\eqref{prob:master}.
\end{theorem}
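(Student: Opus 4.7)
My plan is to split the argument into two parts: first, I verify that whenever \Cref{alg:upper_level_cpa} terminates via the test in Step 3, the incumbent $\hat{\bm z}$ is genuinely $\varepsilon$-optimal; second, I show that Step 3 must be triggered after a bounded number of iterations by exploiting the finiteness of $\mathcal{Z}_N^k$ together with the cuts accumulated in Step 4.

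For the correctness part, the key observation is that the surrogate upper-level problem~\eqref{prob:master_relax} is a relaxation of Problem~\eqref{prob:master}. By the subgradient inequality~\eqref{eq:cut}, every cut added in Step 4 is globally valid in the sense that $f(\bm z) \ge f(\bm z_s) + \bm g(\bm z_s)^\top (\bm z - \bm z_s)$ for every $\bm z \in \mathcal{Z}_N^k$. Combined with the initial bound $\theta \ge \theta_{\text{LB}}$ in Eq.~\eqref{eq:initial_relax_region}, this implies that the pair $(\bm z^\star, f(\bm z^\star))$ remains feasible for Problem~\eqref{prob:master_relax} at every iteration, where $\bm z^\star$ denotes any optimum of Problem~\eqref{prob:master}. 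Hence $\text{LB}_t = \theta_t \le f^\star$. On the other hand, every visited $\bm z_s$ is feasible for Problem~\eqref{prob:master}, so the monotone updating rule in Step 2 guarantees $\text{UB}_t = \min_{s \le t} f(\bm z_s) \ge f^\star$. Once $\text{UB}_t - \text{LB}_t \le \varepsilon$, the chain $f^\star \le f(\hat{\bm z}) = \text{UB}_t \le \text{LB}_t + \varepsilon \le f^\star + \varepsilon$ yields the claimed $\varepsilon$-optimality.

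For the finiteness part, I argue that no element of $\mathcal{Z}_N^k$ can be selected twice as $\bm z_t$ without triggering Step 3. Suppose $\bm z_t = \bm z$ at some iteration $t$ and the algorithm does not terminate; then in Step 4 the cut $\theta \ge f(\bm z) + \bm g(\bm z)^\top (\bm z' - \bm z)$ is appended to the feasible region. At any later iteration $t' > t$, feasibility of $(\bm z_{t'}, \theta_{t'})$ under this cut gives $\theta_{t'} \ge f(\bm z) + \bm g(\bm z)^\top (\bm z_{t'} - \bm z)$; in the particular case $\bm z_{t'} = \bm z$ this specialises to $\text{LB}_{t'} = \theta_{t'} \ge f(\bm z)$, while Step 2 ensures $\text{UB}_{t'} \le f(\bm z_{t'}) = f(\bm z)$. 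The termination test therefore fires immediately at iteration $t'$. Consequently each $\bm z \in \mathcal{Z}_N^k$ is chosen at most once before Step 3 succeeds, and the total iteration count is bounded by $|\mathcal{Z}_N^k| + 1 = \binom{N}{k} + 1$.

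The main obstacle, and the only non-bookkeeping ingredient, is ensuring that the linear underestimator~\eqref{eq:cut} generated from the subgradient formula~\eqref{eq:subgradient} is uniformly valid over $\mathcal{Z}_N^k$, so that the cuts accumulated across iterations neither exclude the true optimum nor lose their cutting power when the same $\bm z$ reappears; once the convexity of the extended function $f$ on $[0,1]^N$ and the explicit subgradient expression---both already recorded in the discussion preceding Eq.~\eqref{eq:cut}---are taken as given, the theorem reduces to the two routine book-keeping arguments sketched above.
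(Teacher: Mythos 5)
Your proof is correct and follows essentially the same route as the paper's: the core step in both is that a repeated choice $\bm z_{t'}=\bm z_t$ forces $\theta_{t'}\ge f(\bm z_t)$ via the previously added cut while $\mathrm{UB}_{t'}\le f(\bm z_t)$, so the termination test fires, and finiteness of $\mathcal{Z}_N^k$ then bounds the iteration count. You are somewhat more explicit than the paper about why $\mathrm{LB}_t\le f^\star\le \mathrm{UB}_t$ (validity of the cuts and of the relaxation), which the paper leaves implicit, but this is elaboration rather than a different argument.
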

\begin{proof}
Let $\{(\bm z_t,\theta_t) \mid t \in [T]\}$ be a sequence of solutions generated by \Cref{alg:upper_level_cpa}. 
Suppose that there exists $t < T$ such that $\bm z_{t} = \bm z_T$. 
Since $(\bm z_T, \theta_T) \in \mathcal{F}_{t+1}$, it follows from Eq.~\eqref{eq:cut_subgrad} that
\[
\text{LB}_{T} = \theta_T \ge f(\bm z_{t}) + \bm g(\bm z_{t})^\top (\bm z_T - \bm z_{t}) = f(\bm z_T) \ge \text{UB}_T, 
\]
which verifies that $\bm z_T$ is an optimal solution to Problem~\eqref{prob:master}. 
Since there are at most a finite number of possible solutions $\bm z \in \mathcal{Z}_N^k$, the algorithm terminates with an $\varepsilon$-optimal solution after a finite number of iterations. 
\end{proof}

 \begin{algorithm}[ht]
 \caption{Cutting-plane algorithm for solving Problem~\eqref{prob:master}} 
 \label{alg:upper_level_cpa}
 \begin{algorithmic} 
 \normalsize
 \STATE \begin{description}
 \item[\textbf{Step 0~\textsf{(Initialization)}}] Let $\varepsilon \geq 0$ be a tolerance for optimality. 
 Define the feasible region~$\mathcal{F}_1$ as in Eq.~\eqref{eq:initial_relax_region}.
 Set $t\leftarrow 1$ and $\text{UB}_0 \leftarrow \infty$.
 \item[\textbf{Step 1 \textsf{(Surrogate Upper-level Problem)}}] Solve Problem \eqref{prob:master_relax}. Let $(\bm z_t, \theta_t)$ be an optimal solution, and set $\text{LB}_t \leftarrow \theta_t$. 
\item[\textbf{Step 2 \textsf{(Dual Lower-level Problem)}}] Solve Problem~\eqref{prob:lower_dual} with $\bm z = \bm z_t$ to calculate $f(\bm z_t)$ and $\bm \omega^\star(\bm z_t)$. If $f(\bm z_t) < \text{UB}_{t-1}$, set $\text{UB}_{t} \leftarrow f(\bm z_t)$ and 
$\hat{\bm z} \leftarrow \bm z_t$; otherwise, set $\text{UB}_{t} \leftarrow \text{UB}_{t-1}$. 
\item[\textbf{Step 3 \textsf{(Termination Condition)}}] If $\text{UB}_t-\text{LB}_t\leq \varepsilon$, terminate the algorithm with the $\varepsilon$-optimal solution~$\hat{\bm z}$. 
\item[\textbf{Step 4 \textsf{(Cut Generation)}}] Calculate $\bm g(\bm z_t)$ as in Eq.~\eqref{eq:subgradient} and update the feasible region as in Eq.~\eqref{eq:cut_subgrad}. 
Set $t\leftarrow t+1$ and return to Step 1.
\end{description}
 \end{algorithmic} 
 \end{algorithm}

\section{Reduction of the lower-level SDO problem}\label{sec:matrix_completion}
Recall that Step 2 of \Cref{alg:upper_level_cpa} solves Problem \eqref{prob:lower_dual}, which is an SDO problem including positive semidefinite constraints~\eqref{dual_const_sd:1}~and~\eqref{dual_const_sd:2} on $(N+1)\times (N+1)$ symmetric matrices. 
It is clearly difficult to directly solve Problem \eqref{prob:lower_dual} when $N$ is very large. 
To remedy this situation, we reduce its problem size by applying the technique of positive semidefinite matrix completion~\citep{Fukuda2001,Nakata2003} to the lower-level SDO problem \eqref{prob:lower_dual}. 

\subsection{Reduced problem formulation}
For any vector $\bm v \coloneqq (v_n) \in \mathbb{R}^N$ and matrix $\bm M \coloneqq (M_{nn'}) \in \mathbb{R}^{N\times N}$, we write the subvector and submatrix corresponding to $\bm z, \bm z' \in \{0,1\}^N$ as 
\begin{align*}
\bm v_{\bm z} & \coloneqq (v_n)_{n \in \mathcal{N}(\bm z)} = (v_n \mid z_n =1) \in \mathbb R^{|\mathcal{N}(\bm z)|}, \\ 
\bm M_{\bm z, \bm z'} & \coloneqq (M_{nn'})_{(n,n') \in \mathcal{N}(\bm z) \times \mathcal{N}(\bm z')} = (M_{nn'} \mid z_n = z_{n'} = 1) \in \mathbb R^{|\mathcal{N}(\bm z)| \times |\mathcal{N}(\bm z')|},  
\end{align*}
where $\mathcal{N}(\bm z) \coloneqq \{n \in [N] \mid z_n = 1\}$.

When $\bm z\in \mathcal{Z}_N^k$, we have $|\mathcal{N}(\bm z)| = k$. 
Then $\bm \omega \in \mathbb R^N$ can be replaced with its subvector $\bm \omega_{\bm z} \in \mathbb R^k$ in the objective~\eqref{eq:lower_dual_obj} as follows:
\begin{equation}\label{eq:l2reg}
\frac{\gamma}{2} \bm z^\top (\bm \omega\circ \bm \omega) = \frac{\gamma}{2} \bm \omega_{\bm z}^\top \bm \omega_{\bm z}.
\end{equation}
The cardinality parameter $k$ is usually much smaller than $N$.
We exploit this problem structure to reduce the size of Problem \eqref{prob:lower_dual}.

A reduced version of Problem \eqref{prob:lower_dual} for $\bm z \in \mathcal{Z}_N^k$ is formulated as 
\begin{subequations}\label{prob:lower_dual_reduced}
\begin{align}
 f'(\bm z) = \maximize_{\bm \omega_{\bm z},\bm B_{\bm z, \bm z} ,\bm \beta_{\bm z}, \bm \eta, \bm \lambda_{\bm z},\pi} &\quad -\frac{\gamma}{2} \bm \omega_{\bm z}^\top \bm \omega_{\bm z} - \sum_{\ell\in [L]}\eta^{(\ell)}b^{(\ell)} + \pi\\
\subjectto & \quad \bm \omega_{\bm z} \geq \sum_{\ell\in [L]}a^{(\ell)} \bm \beta_{\bm z}^{(\ell)} +\pi\bm 1,\\
&\quad \sum_{\ell\in [L]}\bm B_{\bm z,\bm z}^{(\ell)}=\kappa_2 \hat{\bm \Sigma}_{\bm z, \bm z}  -\hat{\bm \mu}_{\bm z}\hat{\bm \mu}_{\bm z}^\top+ \hat{\bm \mu}_{\bm z}\left(\sum_{\ell\in [L]}\bm \beta^{(\ell)}_{\bm z}\right)^\top+\left(\sum_{\ell\in [L]}\bm \beta^{(\ell)}_{\bm z}\right)\hat{\bm \mu}_{\bm z}^\top,\label{lower_dual_reduced_const_eq:1}\\
&\quad \sum_{\ell\in [L]}\bm \beta^{(\ell)}_{\bm z} = \bm \lambda_{\bm z}+\hat{\bm \mu}_{\bm z},\label{lower_dual_reduced_const_eq:2}\\
&\quad  1 - \sum_{\ell\in [L]}\eta^{(\ell)} = 0,\label{lower_dual_reduced_const_eq:3}\\
&\quad  \begin{pmatrix}
     \bm B^{(\ell)}_{\bm z,\bm z} &\bm \beta^{(\ell)}_{\bm z}\\
     (\bm \beta_{\bm z}^{(\ell)})^{\top} & \eta^{(\ell)}
    \end{pmatrix}\succeq \bm O\quad (\ell \in [L]),\label{lower_dual_reduced_const_sd:1}\\
&\quad \begin{pmatrix}
     \hat{\bm \Sigma}_{\bm z,\bm z} &\bm \lambda_{\bm z}\\
     \bm \lambda_{\bm z}^\top & \kappa_1
    \end{pmatrix}\succeq \bm O,\label{lower_dual_reduced_const_sd:2}
\end{align}
\end{subequations}
where $\bm \omega_{\bm z}\in \mathbb{R}^k$, $\bm B_{\bm z,\bm z} \coloneqq (\bm B_{\bm z,\bm z}^{(\ell)}) \in \mathbb{R}^{k \times k \times L}$, 
$\bm \beta_{\bm z} \coloneqq (\bm \beta_{\bm z}^{(\ell)}) \in \mathbb{R}^{k \times L}$, and 
$\bm \lambda_{\bm z}\in \mathbb{R}^k$ are reduced versions of decision variables. 

In the next subsection, we verify that the reduced problem~\eqref{prob:lower_dual_reduced} is equivalent to the original problem~\eqref{prob:lower_dual} in the sense that an optimal solution to the original problem \eqref{prob:lower_dual} can be recovered from an optimal solution to the reduced problem~\eqref{prob:lower_dual_reduced}. 
We also prove that $f'(\bm z)$ defined by the reduced problem~\eqref{prob:lower_dual_reduced} is equal to $f(\bm z)$ defined by the original problem~\eqref{prob:lower_dual}.

\subsection{Equivalence of original and reduced lower-level problems}\label{sec:equiv}

We first focus on the following lemma.
\begin{lemma}\label{lem:bar_beta_ineq} 
Let $(\bm \omega_{\bm z}, \bm B_{\bm z,\bm z}, \bm \beta_{\bm z},\bm \eta, \bm \lambda_{\bm z}, \pi)$ be a feasible solution to Problem \eqref{prob:lower_dual_reduced} for $\bm z \in \mathcal{Z}_N^k$, and 
$\bar{\bm \beta} \coloneqq (\bar{\bm \beta}^{(\ell)}) \in \mathbb{R}^{N \times L}$ be defined as 
\begin{equation}\label{def:beta}
     \left\{
    \begin{aligned}
          &\bar{\bm \beta}^{(\ell)}_{\bm z} \coloneqq \bm \beta_{\bm z}^{(\ell)},\\
    &\bar{\bm \beta}^{(\ell)}_{\bm 1 -\bm z} \coloneqq \hat{\bm \Sigma}_{{\bm 1-\bm z},{\bm z} }(\hat{\bm \Sigma}_{{\bm z},{\bm z} })^{-1}(\bm  \beta_{\bm z} ^{(\ell)}-\eta^{(\ell)}\hat{\bm \mu}_{\bm z} )+\eta^{(\ell)}\hat{\bm \mu}_{\bm 1-\bm z} 
    \end{aligned}
    \right. \quad(\forall \ell \in [L]).
\end{equation}
It then follows that
\begin{align}\label{eq:matrix_ineq}
    \kappa_2 \hat{\bm \Sigma} \succeq \sum_{\substack{\ell\in [L]\\\eta^{(\ell)}> 0}}\frac{1}{\eta^{(\ell)}}(\bar{\bm \beta}^{(\ell)}-\eta^{(\ell)}\hat{\bm \mu})(\bar{\bm \beta}^{(\ell)}-\eta^{(\ell)}\hat{\bm \mu})^\top.
\end{align}
\end{lemma}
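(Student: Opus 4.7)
The plan is to first establish the desired inequality on the $\bm z$-indexed principal block using the reduced problem's PSD and linear constraints, and then lift it to the full $N\times N$ inequality by a Schur-complement factorization of $\hat{\bm\Sigma}$. For the block step, I would take the Schur complement of the PSD constraint~\eqref{lower_dual_reduced_const_sd:1}: when $\eta^{(\ell)}>0$ this yields $\bm B^{(\ell)}_{\bm z,\bm z}\succeq(1/\eta^{(\ell)})\bm\beta^{(\ell)}_{\bm z}(\bm\beta^{(\ell)}_{\bm z})^\top$, while $\eta^{(\ell)}=0$ forces $\bm\beta^{(\ell)}_{\bm z}=\bm 0$ because a zero diagonal entry in a PSD matrix kills its row and column. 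Summing over $\ell$, substituting the equality~\eqref{lower_dual_reduced_const_eq:1}, and completing the square around $\eta^{(\ell)}\hat{\bm\mu}_{\bm z}$ with $\sum_\ell\eta^{(\ell)}=1$ from~\eqref{lower_dual_reduced_const_eq:3} and $\sum_\ell\bm\beta^{(\ell)}_{\bm z}=\bm\lambda_{\bm z}+\hat{\bm\mu}_{\bm z}$ from~\eqref{lower_dual_reduced_const_eq:2}, the cross terms involving $\hat{\bm\mu}_{\bm z}$ and $\bm\lambda_{\bm z}$ cancel neatly and I obtain
\begin{equation*}
\kappa_2\hat{\bm\Sigma}_{\bm z,\bm z}\;\succeq\;\sum_{\substack{\ell\in[L]\\ \eta^{(\ell)}>0}}\frac{1}{\eta^{(\ell)}}(\bm\beta^{(\ell)}_{\bm z}-\eta^{(\ell)}\hat{\bm\mu}_{\bm z})(\bm\beta^{(\ell)}_{\bm z}-\eta^{(\ell)}\hat{\bm\mu}_{\bm z})^\top.
\end{equation*}

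For the lifting step, I reorder indices so that the coordinates in $\mathcal N(\bm z)$ come first and use the block factorization
\begin{equation*}
\hat{\bm\Sigma}=\bm L\begin{pmatrix}\hat{\bm\Sigma}_{\bm z,\bm z}&\bm O\\ \bm O&\bm S\end{pmatrix}\bm L^\top,\qquad \bm L\coloneqq\begin{pmatrix}\bm I&\bm O\\ \hat{\bm\Sigma}_{\bm 1-\bm z,\bm z}\hat{\bm\Sigma}_{\bm z,\bm z}^{-1}&\bm I\end{pmatrix},
\end{equation*}
where $\bm S\coloneqq\hat{\bm\Sigma}_{\bm 1-\bm z,\bm 1-\bm z}-\hat{\bm\Sigma}_{\bm 1-\bm z,\bm z}\hat{\bm\Sigma}_{\bm z,\bm z}^{-1}\hat{\bm\Sigma}_{\bm z,\bm 1-\bm z}\succeq\bm O$ is the Schur complement of the $(\bm z,\bm z)$-block (positive semidefinite by Assumption~\ref{asp:1}). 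The definition~\eqref{def:beta} is engineered precisely so that $\bar{\bm\beta}^{(\ell)}-\eta^{(\ell)}\hat{\bm\mu}$ equals $\bm L$ applied to the vector obtained by padding $\bm\beta^{(\ell)}_{\bm z}-\eta^{(\ell)}\hat{\bm\mu}_{\bm z}$ with zeros on the $\bm 1-\bm z$ block. Substituting this identity and the factorization into the left-hand side of~\eqref{eq:matrix_ineq} turns the whole expression into
\begin{equation*}
\bm L\begin{pmatrix}\bm D&\bm O\\ \bm O&\kappa_2\bm S\end{pmatrix}\bm L^\top,
\end{equation*}
where $\bm D$ is exactly the nonnegative difference established in the block step; since $\bm D\succeq\bm O$ and $\kappa_2\bm S\succeq\bm O$, congruence by $\bm L$ preserves positive semidefiniteness and the inequality follows.

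The main obstacle will be cleanly verifying the ``zero-padded image'' identity $\bar{\bm\beta}^{(\ell)}-\eta^{(\ell)}\hat{\bm\mu}=\bm L\bigl((\bm\beta^{(\ell)}_{\bm z}-\eta^{(\ell)}\hat{\bm\mu}_{\bm z})^\top,\bm 0^\top\bigr)^\top$ and confirming that, after summation over $\ell$, the cross terms in the block step telescope exactly as advertised; once these two algebraic identities are in hand, the remainder is a mechanical congruence argument. The $\eta^{(\ell)}=0$ edge case is a minor technicality handled by the zero-diagonal PSD argument mentioned above.
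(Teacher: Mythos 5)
Your proposal is correct and follows essentially the same route as the paper's proof: the block bound $\kappa_2\hat{\bm\Sigma}_{\bm z,\bm z}\succeq\sum_{\ell}\frac{1}{\eta^{(\ell)}}(\bm\beta^{(\ell)}_{\bm z}-\eta^{(\ell)}\hat{\bm\mu}_{\bm z})(\bm\beta^{(\ell)}_{\bm z}-\eta^{(\ell)}\hat{\bm\mu}_{\bm z})^\top$ via the Schur complement of \eqref{lower_dual_reduced_const_sd:1} and the equality \eqref{lower_dual_reduced_const_eq:1}, followed by the observation that $\bar{\bm\beta}^{(\ell)}-\eta^{(\ell)}\hat{\bm\mu}$ lies in the image of $\begin{pmatrix}\bm I\\ \hat{\bm\Sigma}_{\bm 1-\bm z,\bm z}\hat{\bm\Sigma}_{\bm z,\bm z}^{-1}\end{pmatrix}$ and a congruence argument using the positive semidefiniteness of the Schur complement of $\hat{\bm\Sigma}$. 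Your packaging via the block $\bm L\bm D\bm L^\top$ factorization is only a cosmetic variant of the paper's chain of matrix inequalities, so no substantive difference to report.
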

\begin{proof}
See \ref{sec:proof_bar_beta_ineq}.
\end{proof}

We next prove that feasible $\bm B$ and $\bm \beta$ for the original problem~\eqref{prob:lower_dual} can be completed from a feasible solution to the reduced problem~\eqref{prob:lower_dual_reduced}. 
\begin{lemma}\label{lem:B_completion}
Let $(\bm \omega_{\bm z}, \bm B_{\bm z,\bm z}, \bm \beta_{\bm z},\bm \eta, \bm \lambda_{\bm z}, \pi)$ be a feasible solution to Problem \eqref{prob:lower_dual_reduced} for $\bm z \in \mathcal{Z}_N^k$, and $\bar{\bm \beta}$ be defined by Eq.~\eqref{def:beta}.  
There then exists 
$\bar{\bm B} \coloneqq (\bar{\bm B}^{(\ell)}) \in \mathbb{R}^{N \times N \times L}$ such that $(\bm B,\bm\beta) = (\bar{\bm B},\bar{\bm\beta})$ satisfies Eqs.~\eqref{dual_const_eq:1} and \eqref{dual_const_sd:1}.
\end{lemma}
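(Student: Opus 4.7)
\medskip

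\noindent\textbf{Proof proposal.} The plan is to construct $\bar{\bm B}^{(\ell)}$ explicitly as a rank-one ``Schur-complement'' term plus a convex share of a positive semidefinite residual whose nonnegativity is guaranteed by \Cref{lem:bar_beta_ineq}. Concretely, I will set
\[
\bar{\bm B}^{(\ell)} \coloneqq
\begin{cases}
\dfrac{1}{\eta^{(\ell)}}\bar{\bm\beta}^{(\ell)}(\bar{\bm\beta}^{(\ell)})^\top + \eta^{(\ell)} \bm R & \text{if } \eta^{(\ell)} > 0, \\[2mm]
\bm O & \text{if } \eta^{(\ell)} = 0,
\end{cases}
\]
where $\bm R$ is the residual matrix
\[
\bm R \coloneqq \kappa_2 \hat{\bm\Sigma} - \hat{\bm\mu}\hat{\bm\mu}^\top + \hat{\bm\mu}\Bigl(\sum_{\ell\in[L]}\bar{\bm\beta}^{(\ell)}\Bigr)^\top + \Bigl(\sum_{\ell\in[L]}\bar{\bm\beta}^{(\ell)}\Bigr)\hat{\bm\mu}^\top - \sum_{\substack{\ell\in[L]\\ \eta^{(\ell)}>0}} \frac{1}{\eta^{(\ell)}}\bar{\bm\beta}^{(\ell)}(\bar{\bm\beta}^{(\ell)})^\top.
\]

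The first task is to show $\bm R \succeq \bm O$. I will expand the rank-one terms on the right-hand side of \Cref{lem:bar_beta_ineq} using $(\bar{\bm\beta}^{(\ell)}-\eta^{(\ell)}\hat{\bm\mu})(\bar{\bm\beta}^{(\ell)}-\eta^{(\ell)}\hat{\bm\mu})^\top$, divide by $\eta^{(\ell)}$, sum over $\ell$ with $\eta^{(\ell)}>0$, and use $\sum_{\ell}\eta^{(\ell)}=1$ from \eqref{lower_dual_reduced_const_eq:3} together with the observation that $\bar{\bm\beta}^{(\ell)} = \bm 0$ whenever $\eta^{(\ell)}=0$. The latter follows from the reduced PSD constraint \eqref{lower_dual_reduced_const_sd:1} (which forces $\bm\beta_{\bm z}^{(\ell)}=\bm 0$ when $\eta^{(\ell)}=0$) and the completion formula \eqref{def:beta}. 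After rearrangement, the inequality of \Cref{lem:bar_beta_ineq} becomes precisely $\bm R \succeq \bm O$.

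Next I will verify constraint \eqref{dual_const_eq:1}: summing $\bar{\bm B}^{(\ell)}$ over $\ell$, the rank-one contributions collapse and $\sum_{\ell}\eta^{(\ell)}\bm R = \bm R$, giving
\[
\sum_{\ell\in[L]} \bar{\bm B}^{(\ell)} = \kappa_2\hat{\bm\Sigma}-\hat{\bm\mu}\hat{\bm\mu}^\top + \hat{\bm\mu}\Bigl(\sum_{\ell\in[L]}\bar{\bm\beta}^{(\ell)}\Bigr)^\top + \Bigl(\sum_{\ell\in[L]}\bar{\bm\beta}^{(\ell)}\Bigr)\hat{\bm\mu}^\top,
\]
as required. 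For constraint \eqref{dual_const_sd:1}, when $\eta^{(\ell)}>0$ the Schur complement condition reads $\bar{\bm B}^{(\ell)} \succeq \tfrac{1}{\eta^{(\ell)}}\bar{\bm\beta}^{(\ell)}(\bar{\bm\beta}^{(\ell)})^\top$, which holds since $\eta^{(\ell)}\bm R \succeq \bm O$; when $\eta^{(\ell)}=0$, the block matrix reduces to $\mathrm{blkdiag}(\bm O,0)$, which is trivially PSD.

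The main obstacle is the bookkeeping around $\eta^{(\ell)}=0$: both the definition of $\bar{\bm B}^{(\ell)}$ and the derivation of $\bm R\succeq \bm O$ must avoid dividing by zero, and the whole argument hinges on first establishing that $\bar{\bm\beta}^{(\ell)}=\bm 0$ in this degenerate case. Apart from this, everything reduces to the algebraic identity combining \eqref{lower_dual_reduced_const_eq:3}, the definition \eqref{def:beta}, and the matrix inequality supplied by \Cref{lem:bar_beta_ineq}.
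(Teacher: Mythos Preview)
Your proposal is correct and follows essentially the same route as the paper: both arguments set the base term $\widetilde{\bm B}^{(\ell)}=\tfrac{1}{\eta^{(\ell)}}\bar{\bm\beta}^{(\ell)}(\bar{\bm\beta}^{(\ell)})^\top$, use \Cref{lem:bar_beta_ineq} together with \eqref{lower_dual_reduced_const_eq:3} to show the residual is positive semidefinite, and then add PSD corrections to meet \eqref{dual_const_eq:1} while preserving \eqref{dual_const_sd:1}. The only difference is cosmetic---the paper says ``appropriate positive semidefinite matrices'' exist, whereas you make the explicit (and natural) choice $\eta^{(\ell)}\bm R$ for the correction.
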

\begin{proof}
See \ref{sec:proof_B_completion}.
\end{proof}

We are now in a position to prove our main theorem. 
\begin{theorem}\label{thm:reduction}
Let $(\bm \omega_{\bm z}, \bm B_{\bm z,\bm z}, \bm \beta_{\bm z},\bm \eta, \bm \lambda_{\bm z}, \pi)$ be an optimal solution to Problem \eqref{prob:lower_dual_reduced} for $\bm z \in \mathcal{Z}_N^k$, and $(\bar{\bm \omega}, \bar{\bm \beta}, \bar{\bm \lambda})$ be defined by Eq.~\eqref{def:beta} and 
\begin{align}
    & \left\{
    \begin{aligned}
    &\bar{\bm \omega}_{\bm z} \coloneqq \bm \omega_{\bm z},\\
    &\textstyle \bar{\bm \omega}_{\bm 1 -\bm z} \coloneqq \left[\sum_{\ell\in [L]}a^{(\ell)}\bar{\bm \beta}_{\bm 1 -\bm z}^{(\ell)}+\pi\bm 1\right]_{+},
    \end{aligned}
    \right. \label{eq:def_bar_omega} \\
    & \left\{
    \begin{aligned}
    &\bar{\bm \lambda}_{\bm z} \coloneqq \bm \lambda_{\bm z},\\
    &\bar{\bm \lambda}_{\bm 1 -\bm z} \coloneqq \hat{\bm \Sigma}_{\bm 1 -\bm z, \bm z}(\hat{\bm \Sigma}_{\bm z, \bm z})^{-1}\bm \lambda_{\bm z},
    \end{aligned}
    \right. \label{eq:def_bar_lambda}
\end{align}
where $[\bm v]_{+} \coloneqq (\max \{0, v_n\}) \in \mathbb{R}^N$ for $\bm v \coloneqq (v_n) \in \mathbb{R}^N$.
There then exists $\bar{\bm B}$ such that $(\bar{\bm \omega}, \bar{\bm B}, \bar{\bm \beta}, \bm \eta, \bar{\bm \lambda}, \pi)$ is an optimal solution to Problem \eqref{prob:lower_dual}. 
In addition, we have $f(\bm z) = f'(\bm z)$ for all $\bm z \in \mathcal{Z}_N^k$. 
\end{theorem}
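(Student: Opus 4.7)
The plan is to prove the theorem in two steps: (i) show that the extension $(\bar{\bm \omega}, \bar{\bm B}, \bar{\bm \beta}, \bm \eta, \bar{\bm \lambda}, \pi)$ is feasible for Problem~\eqref{prob:lower_dual} and achieves the same objective value as the reduced optimum $f'(\bm z)$, which gives $f(\bm z) \geq f'(\bm z)$ together with optimality of the extension once equality is established; (ii) show that restricting any feasible solution of Problem~\eqref{prob:lower_dual} to the $\bm z$-indexed entries yields a feasible solution of Problem~\eqref{prob:lower_dual_reduced} with the same objective value, giving $f(\bm z) \leq f'(\bm z)$.

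For step (i), the existence of $\bar{\bm B}$ satisfying \eqref{dual_const_eq:1} and \eqref{dual_const_sd:1} is already supplied by Lemma~\ref{lem:B_completion}. The scalar equation \eqref{dual_const_eq:3} is unchanged from \eqref{lower_dual_reduced_const_eq:3}. The inequality \eqref{dual_const_ineq:1} holds on the $\bm z$-block by the reduced constraint and on the $\bm 1-\bm z$-block by the very definition of $\bar{\bm \omega}_{\bm 1-\bm z}$ in \eqref{eq:def_bar_omega} as a positive part. For \eqref{dual_const_eq:2}, the $\bm z$-block is exactly \eqref{lower_dual_reduced_const_eq:2}; substituting the definitions \eqref{def:beta} and \eqref{eq:def_bar_lambda} into the $\bm 1-\bm z$-block and using \eqref{lower_dual_reduced_const_eq:3} to collapse $\sum_\ell \eta^{(\ell)} = 1$, followed by \eqref{lower_dual_reduced_const_eq:2}, verifies the identity by direct computation. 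Finally, since $\bar{\bm \omega}_{\bm z} = \bm \omega_{\bm z}$, we have $\bm z^\top (\bar{\bm \omega} \circ \bar{\bm \omega}) = \bm \omega_{\bm z}^\top \bm \omega_{\bm z}$ as in \eqref{eq:l2reg}, so evaluating \eqref{eq:lower_dual_obj} at the extension reproduces the reduced optimum.

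The main obstacle is verifying \eqref{dual_const_sd:2} for the completed $\bar{\bm \lambda}$. Since $\hat{\bm \Sigma} \succ \bm O$ by Assumption~\ref{asp:1}, this PSD condition is equivalent via Schur complement to $\kappa_1 \geq \bar{\bm \lambda}^\top \hat{\bm \Sigma}^{-1} \bar{\bm \lambda}$. Partitioning $\hat{\bm \Sigma}$ along the $\bm z$ and $\bm 1 - \bm z$ indices and using the block-inverse formula, one checks that the particular choice $\bar{\bm \lambda}_{\bm 1-\bm z} = \hat{\bm \Sigma}_{\bm 1-\bm z, \bm z} \hat{\bm \Sigma}_{\bm z, \bm z}^{-1} \bm \lambda_{\bm z}$ annihilates the $\bm 1-\bm z$-block of $\hat{\bm \Sigma}^{-1} \bar{\bm \lambda}$, so $\bar{\bm \lambda}^\top \hat{\bm \Sigma}^{-1} \bar{\bm \lambda} = \bm \lambda_{\bm z}^\top \hat{\bm \Sigma}_{\bm z, \bm z}^{-1} \bm \lambda_{\bm z}$, which is at most $\kappa_1$ by applying Schur complement once more to the reduced constraint \eqref{lower_dual_reduced_const_sd:2}. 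For step (ii), every linear equality or inequality of Problem~\eqref{prob:lower_dual} restricts componentwise to its counterpart in Problem~\eqref{prob:lower_dual_reduced}; the two PSD constraints restrict to principal submatrices, which remain positive semidefinite; and the objective is preserved because the quadratic term again satisfies $\bm z^\top (\bm \omega \circ \bm \omega) = \bm \omega_{\bm z}^\top \bm \omega_{\bm z}$. Combining (i) and (ii) yields $f(\bm z) = f'(\bm z)$ and establishes optimality of the extension for Problem~\eqref{prob:lower_dual}.
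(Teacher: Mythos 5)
Your proposal is correct and follows essentially the same route as the paper: both directions of the inequality $f(\bm z)=f'(\bm z)$, restriction to principal subblocks for one direction, and the completion via Lemma~\ref{lem:B_completion} plus Schur-complement verification of \eqref{dual_const_sd:2} for the other. The only (immaterial) difference is that you certify \eqref{dual_const_sd:2} by computing $\hat{\bm\Sigma}^{-1}\bar{\bm\lambda}$ with the block-inverse formula to get $\bar{\bm\lambda}^\top\hat{\bm\Sigma}^{-1}\bar{\bm\lambda}=\bm\lambda_{\bm z}^\top\hat{\bm\Sigma}_{\bm z,\bm z}^{-1}\bm\lambda_{\bm z}\le\kappa_1$, whereas the paper reaches the same quantity via a congruence transformation.
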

\begin{proof}
See  \ref{sec:proof_reduction}.
\end{proof}

\begin{remark}
Note that Eq.~\eqref{eq:def_bar_omega} defines a minimum-norm solution to Problem \eqref{prob:lower_dual}. 
This solution is known to generate strong cutting planes~\citep{Bertsimas2018}. 
\end{remark}

\begin{remark}
When $\kappa_1=0$, unlike in \Cref{asp:1}, we can set $\mathbb{E}_F[\tilde{\bm \xi}] = \hat{\bm \mu}$ and delete the first condition in Eq.~\eqref{eq:uncertain_set}. 
In this case, we can also prove the theorem corresponding to \Cref{thm:reduction}. 
\end{remark}

From \Cref{thm:reduction}, we can revise Step 2 of \Cref{alg:upper_level_cpa} as follows: 
\begin{description}
\item[\textbf{Step 2 \textsf{(Reduced Dual Lower-level Problem)}}] Solve Problem~\eqref{prob:lower_dual_reduced} with $\bm z = \bm z_t$. 
Let $(\bm \omega_{\bm z}, \bm B_{\bm z,\bm z}, \bm \beta_{\bm z},\bm \eta, \bm \lambda_{\bm z}, \pi)$ be an optimal solution. 
Calculate $f(\bm z_t) = f'(\bm z_t)$ and $\bm \omega^\star(\bm z_t) = \bar{\bm \omega}$ from Eqs.~\eqref{def:beta}~and~\eqref{eq:def_bar_omega}. 
If $f(\bm z_t) < \text{UB}_{t-1}$, set $\text{UB}_{t} \leftarrow f(\bm z_t)$ and 
$\hat{\bm z} \leftarrow \bm z_t$; otherwise, set $\text{UB}_{t} \leftarrow \text{UB}_{t-1}$. 
\end{description}
\textcolor{black}{
In the revised Step 2 of \Cref{alg:upper_level_cpa}, we calculate a subgradient $\bm g(\bm z)$ by solving Problem \eqref{prob:lower_dual_reduced}, whose size depends on $k$ instead of $N$. 
Since $k$ is much smaller than $N$ for practical purposes, we can efficiently calculate a subgradient $\bm g(\bm z)$ even when $N$ is very large.
}

\begin{remark}\label{rmk:linear_constarint}
Our cutting-plane algorithm can be extended to a feasible set of portfolios with additional linear constraints: 
\begin{equation*}
    \mathcal X' \coloneqq \left\{\bm x \in \mathbb R^N\,\middle|\,\sum_{n\in [N]}x_n=1, \quad \bm x \geq 0, \quad \bm C\bm x \leq \bm d \right\},
\end{equation*}
where $\bm C \in \mathbb{R}^{J\times N}$ and $\bm d\in \mathbb{R}^J$ are given constants (see, e.g., \citet{Bertsimas2018} and \citet{Kobayashi2021} for details). 
\end{remark}
\color{black}
\begin{remark}\label{rmk:buy-in}
As discussed in \citet[Section 3.2]{Bertsimas2018}, our cutting-plane algorithm can cope with the buy-in threshold constraints: 
\begin{equation*}
    x_n = 0 \quad \text{or}\quad v_n \le x_n \quad (\forall n\in [N]),
\end{equation*}
where $\bm v\coloneqq (v_1, v_2,\dots, v_N)^{\top} \in \mathbb{R}^N$ is the given vector representing the minimum investment thresholds. 
Indeed, these constraints can be reduced to a linear constraint (i.e., $\bm Z\bm v \le \bm x$),
which can be imposed on Problem~\eqref{prob:card_constrained_dist_robust}.
\end{remark}
\color{black}

\section{Numerical experiments}\label{sec:experiments}
In this section, we report on the numerical results from evaluating the efficacy of our method for distributionally robust portfolio optimization with the cardinality constraint.
We first examine the computational efficiency of our cutting-plane algorithm and then demonstrate the out-of-sample investment performance of our portfolio optimization model. 
All experiments were conducted on a Ubuntu 22.04 PC with an Intel Xeon Silver 4210R CPU (2.40 GHz) and 64 GB of memory. 

\textcolor{black}{\Cref{tab:datalist} lists the datasets used in our experiments, where $N$ is the number of assets. 
From Yahoo Finance\footnote{\url{https://finance.yahoo.co.jp}}, 
we downloaded a historical dataset (\texttt{nikkei225}) of Japanese stock returns, where the top 30 companies were selected from the Nikkei 225 index according to market capitalization as of December 2020. 
From the data library on the website of \mbox{Kenneth} R.~French\footnote{\url{https://mba.tuck.dartmouth.edu/pages/faculty/ken.french/}}, 
we downloaded two historical datasets (\texttt{ind49} and \texttt{sbm100}) of US stock returns. 
From the Kaggle datasets\footnote{\url{https://www.kaggle.com/camnugent/
sandp500}}, 
we downloaded a historical dataset (i.e., \texttt{s\&p500}) of US stock returns, where the 32 companies including missing values were omitted from the S\&P500 index. 
From the OR-Library~\citep{Beasley1990,Chang2000}, we downloaded two datasets (\texttt{port2} and \texttt{port5}) of the sample estimates $\hat{\bm \mu}$ and $\hat{\bm \Sigma}$. 
}
\begin{table}[ht]
\normalsize
    \centering
    \caption{Dataset description}
    \label{tab:datalist}
    \begin{tabular}{lrl}
        \toprule
        Abbr. &$N$ &Original dataset  \\ \midrule
        \texttt{nikkei225} &30 & 
30 companies in the Japanese stock index\\
\texttt{ind49} &49 &49 Industry Portfolios\\
        \texttt{port2} &85 &port2 (Portfolio optimization: Single period)\\
        \texttt{sbm100} &100 & 100 Portfolios Formed on Size and Book-to-Market\\
        \texttt{port5} &225 &port5 (Portfolio optimization: Single period)\\
        \texttt{s\&p500} &468 & 
468 companies in the S\&P 500 index\\
        \bottomrule
    \end{tabular}
\end{table}

\subsection{Computational efficiency}\label{sec:exp_alg}
We evaluate the computational efficiency of our cutting-plane algorithm by comparing it with other MISDO algorithms.

\subsubsection{Experimental design}\label{sec:expdes1}
\textcolor{black}{
We used the five datasets: \texttt{ind49}, \texttt{port2}, \texttt{sbm100}, \texttt{port5}, and \texttt{s\&p500}, where the \texttt{nikkei225} dataset was omitted because its problem instances were solved by all methods in a very short time.
For the \texttt{ind49} and \texttt{sbm100} datasets, we used monthly data from January 2010 to December 2019 to calculate the sample estimates $\hat{\bm \mu}$ and $\hat{\bm \Sigma}$ of asset returns~\eqref{eq:sample_mean_cov}. 
For the \texttt{port2} and \texttt{port5} datasets, we multiplied the sample estimates $\hat{\bm \mu}$ and $\hat{\bm \Sigma}$ by 100 and 10,000, respectively, to be consistent with the other datasets.
For the \texttt{s\&p500} dataset, we calculated the sample estimates $\hat{\bm \mu}$ and $\hat{\bm \Sigma}$ from daily data from February 8th, 2013 to February 7th, 2018.}

As the utility function~\eqref{eq:utility_func}, we used a piecewise-linear approximation of the normalized exponential utility function~\citep{Ingersoll1987}:
\begin{equation}\label{eq:exp_utility}
    \tilde{u}(y)\coloneqq\frac{\mu_{\max}(1-\exp(-\alpha y/\mu_{\max}))}{\alpha}, 
\end{equation}
where $\mu_{\max}$ is the maximum entry of the sample mean vector $\hat{\bm \mu}$, and $\alpha>0$ is a risk-aversion parameter. 
We set $\alpha=10$ and used three tangent lines (i.e., $L=3$) at $y \in \{0, \mu_{\max}/2, \mu_{\max}\}$ for piecewise-linear approximation as shown in \ref{apd:utility}. 

We compare the computational performance of the following methods for solving the MISDO problem~\eqref{prob:reg_dist_robust_primal}:
\begin{description}
    \item[SCIP:] MISDO solver SCIP-SDP\footnote{\url{http://www.opt.tu-darmstadt.de/scipsdp/}}~\citep{Gally2017},
    \item[CPA:] our cutting-plane algorithm (\Cref{alg:upper_level_cpa}),
    \item[CPA${}_+$:] our cutting-plane algorithm (\Cref{alg:upper_level_cpa}) with the problem reduction~(i.e., Step 2 in \Cref{sec:equiv}).
\end{description}
These methods used Mosek\footnote{\url{https://www.mosek.com/}} 9.2.40 to solve SDO problems.
CPA and CPA${}_+$ were implemented in Python 3.7 with Gurobi Optimizer\footnote{\url{https://www.gurobi.com/}} 8.1.11 to solve the surrogate upper-level problem~\eqref{prob:master_relax}, where the lazy constraint callback was used to add cutting planes during the branch-and-bound procedure.
We set $\varepsilon =10^{-5}$ as the tolerance for optimality. 
The computation of each method was terminated if it did not finish within 3,600~s.
In these cases, the results obtained within 3,600~s were taken as the final outcome.

The following column labels are used in \Cref{tab:result_k}. 
\begin{description}
    \item[Obj:] objective value of the obtained best feasible solution, 
    \item[GAP(\%):] absolute difference between lower and upper bounds on the optimal objective value divided by the upper bound, 
    \item[Time:] computation time in seconds, 
    \item[\#Cuts:] number of cutting planes generated by the cutting-plane algorithms, 
    \item[\#Nodes:] number of nodes explored in the branch-and-bound procedure. 
\end{description}
Note that the best values of ``Time'' are indicated in bold for each problem instance, and those of ``Obj'' are also indicated in bold for the \texttt{port2} and \texttt{s\&p500} datasets.
Also, ``OM'' in the Obj column indicates that the computation did not start due to the out-of-memory condition. 

\subsubsection{Results of computational performance} 
\color{black}
\Cref{tab:result_k} gives the numerical results of each method for the cardinality parameter $k\in \{5,15,25\}$. 
We set $\gamma= 10/\sqrt{N}$ for the $\ell_2$-regularization term and $(\kappa_1,\kappa_2)=(1,4)$ for the ambiguity set. 
Additional results for $k\in \{10, 20\}$ are given in \ref{apd:c}.

We first focus on our cutting-plane algorithms (i.e., CPA and CPA${}_+$). 
Both CPA and CPA${}_+$ failed to finish solving many problem instances for the \texttt{port2} and \texttt{s\&p500} dataset; however, CPA${}_+$ solved other problem instances to optimality much faster than did CPA, especially for large $N$. 
For the \texttt{sbm100} dataset with $k=5$, although CPA was terminated due to the time limit, CPA${}_+$ solved the problem instance completely in 2.7 s. 
These results verify the effectiveness of our matrix-completion-based problem reduction of the dual lower-level SDO problem. 
 
Next, we compare our cutting-plane algorithms with the MISDO solver SCIP. 
CPA${}_+$ was always the fastest when it finished the computations within the time limit. 
SCIP returned incorrect optimal objective values for the \texttt{ind49} dataset with $k=15$ due to numerical instabilities. 
For the \texttt{port2} dataset with $k\in \{5,15\}$, all the three methods failed to complete the computations within the time limit, but CPA${}_+$ found solutions of better quality than SCIP and CPA. 

For the \texttt{port5} dataset, SCIP did not finish solving even a first continuous relaxation problem within the time limit, thus failing to provide a feasible solution for all $k\in\{5,15,25\}$. 
CPA did not start computations due to the out-of-memory condition. 
In contrast, CPA${}_+$ succeeded in computing solutions with guaranteed optimality for all $k\in\{5,15,25\}$. 

For the \texttt{s\&p500} dataset involving 468 investable assets, only CPA${}_+$ worked normally to find feasible solutions.  
The optimality gaps attained with CPA${}_+$ for $k \in \{5,15,25\}$ were 19.9\%, 10.0\%, and 5.5\%, respectively, and this gap will be smaller if longer time can be spent on the computation. 
These results support the potential of CPA${}_+$ to give good-quality solutions to large problem instances with a limited memory capacity. 

The computation time of our cutting-plane algorithms tended to be shorter for larger $k$. 
For the \texttt{port5} dataset, CPA${}_+$ finished the computations in 164.6 s, 21.7 s, and 21.1 s for $k\in \{5,15,25\}$, respectively. 
The numbers of cutting planes and explored nodes were much smaller for $k=25$ than for $k=5$, which is a part of the reason why CPA${}_+$ is faster with larger $k$. 

While the number of investable assets is smaller in the \texttt{port2} dataset than in the \texttt{sbm100} and \texttt{port5} datasets, the computation time of CPA${}_+$ was much longer for the \texttt{port2} dataset. 
We checked log files of Gurobi and found that the optimality gap of CPA${}_+$ was quite large from an early stage for the \texttt{port2} dataset. 
\Cref{tab:result_k} shows that the numbers of cutting planes and explored nodes of CPA${}_+$ were very large for the \texttt{port2} dataset.  
We also noticed that without the cardinality constraint, optimal portfolios were more diversified for the \texttt{port2} dataset than for the \texttt{sbm100} and \texttt{port5} datasets. 
For these reasons, long computation times were required by CPA${}_+$ for the \texttt{port2} dataset. 

\color{black}
\begin{table}[p]
    \centering
        \caption{Numerical results with $\gamma=10/\sqrt{N}$ and $(\kappa_1,\kappa_2)= (1,4)$ for $k\in \{5,15,25\}$}
        \label{tab:result_k}
     \footnotesize
      \renewcommand{\arraystretch}{0.8}
\begin{tabular}{lrrlrrrrrr}
\toprule
Data &$N$& $k$ 	&Method	 &Obj	 &Gap(\%)	&Time 	&\#Cuts 	&\#Nodes\\ \midrule
\texttt{ind49}	&49	&5	&SCIP	&3.108	&0.0	&1365.8	&---	&61\\
	&	&	&CPA	&3.108	&24.1	&$>$3600	&$>$167	&$>$785\\
	&	&	&CPA${}_+$	&3.108	&0.0	&\textbf{40.4}	&211	&1000\\ \cmidrule{3-9} 
	&	&15	&SCIP	&3.034	&0.0	&1049.2	&---	&49\\
	&	&	&CPA	&3.033	&0.0	&127.7	&6	&1\\
	&	&	&CPA${}_+$	&3.033	&0.0	&\textbf{6.6}	&6	&1\\ \cmidrule{3-9} 
	&	&25	&SCIP	&3.033	&0.0	&542.3	&---	&35\\
	&	&	&CPA	&3.033	&0.0	&106.6	&5	&1\\
	&	&	&CPA${}_+$	&3.033	&0.0	&\textbf{16.9}	&5	&1\\ \midrule 
\texttt{port2}	&85	&5	&SCIP	&2.217	&27.5	&$>$3600	&---&$>$14\\
	&	&	&CPA	&2.181	&100.0	&$>$3600	&$>$28	&$>$110\\
	&	&	&CPA${}_+$	&\textbf{2.027}	&71.7	&$>$3600	&$>$16645	&$>$155410\\ \cmidrule{3-9} 
	&	&15	&SCIP	&1.859	&13.5	&$>$3600	&---&$>$14\\
	&	&	&CPA	&2.009	&96.4	&$>$3600	&$>$24	&$>$51\\
	&	&	&CPA${}_+$	&\textbf{1.637}	&19.4	&$>$3600	&$>$3027	&$>$51474\\ \cmidrule{3-9} 
	&	&25	&SCIP	&1.783	&9.9	&$>$3600	&---&$>$18\\
	&	&	&CPA	&\textbf{1.607}	&0.0	&3281.5	&21	&6\\
	&	&	&CPA${}_+$	&\textbf{1.607}	&0.0	&\textbf{77.3}	&22	&12\\ \midrule 
 \texttt{sbm100}	&100	&5	&SCIP	&4.493	&12.4	&$>$3600	&---&$>$5\\
	&	&	&CPA	&3.940	&0.3	&$>$3600	&$>$12	&$>$1\\
	&	&	&CPA${}_+$	&3.940	&0.0	&\textbf{2.7}	&13	&8\\ \cmidrule{3-9} 
	&	&15	&SCIP	&3.972	&0.9	&$>$3600	&---&$>$5\\
	&	&	&CPA	&3.935	&0.0	&1810.1	&6	&1\\
	&	&	&CPA${}_+$	&3.935	&0.0	&\textbf{5.2}	&5	&1\\ \cmidrule{3-9} 
	&	&25	&SCIP	&4.595	&14.3	&$>$3600	&---&$>$5\\
	&	&	&CPA	&3.935	&0.0	&1558.0	&5	&1\\
	&	&	&CPA${}_+$	&3.935	&0.0	&\textbf{16.8}	&5	&1\\  \midrule
\texttt{port5}	&225	&5	&SCIP	&$\infty$	&100.0	&$>$3600	&---&$>$1\\
    &   &   &CPA  & OM &--- &---&---&---\\
	&	&	&CPA${}_+$	&2.812	&0.0	&\textbf{164.6}	&840	&9285\\ \cmidrule{3-9} 
	&	&15	&SCIP	&$\infty$	&100.0	&$>$3600	&---&$>$1\\
	    &   &   &CPA  & OM &--- &---&---&---\\
	&	&	&CPA${}_+$	&2.677	&0.0	&\textbf{21.7}	&19	&34\\ \cmidrule{3-9} 
	&	&25	&SCIP	&$\infty$	&100.0	&$>$3600	&---&$>$1\\
	    &   &   &CPA  & OM &--- &---&---&---\\
	    &	&	&CPA${}_+$	&2.677	&0.0	&\textbf{21.1}	&6	&1\\ \midrule 
 \texttt{s\&p500}	&468	&5	&SCIP	&$\infty$	&---	&$>$3600	&---&$>$1\\
	    &   &   &CPA  & OM &--- &---&---&---\\
	    &	&	&CPA${}_+$	&\textbf{1.058}	&19.9	&$>$3600	&$>$8747	&$>$382504\\ \cmidrule{3-9} 
	&	&15	&SCIP	&$\infty$	&100.0	&$>$3600	&---&$>$1\\
	    &   &   &CPA  & OM &--- &---&---&---\\
	    &	&	&CPA${}_+$	&\textbf{0.833}	&10.0	&$>$3600	&$>$2948	&$>$157613\\ \cmidrule{3-9} 
	&	&25	&SCIP	&$\infty$	&100.0	&$>$3600	&---&$>$1\\
	    &   &   &CPA  & OM &--- &---&---&---\\
	    &	&	&CPA${}_+$	&\textbf{0.789}	&5.5	&$>$3600	&$>$1079	&$>$48047\\ 
\bottomrule
\end{tabular}
\end{table}

\subsection{Out-of-sample investment performance}\label{sec:exp_oos}
We investigate the investment performance of our cardinality-constrained distributionally robust portfolio optimization model in a practical situation. 
\subsubsection{Experimental design}\label{sec:exp_oos_ed}
\color{black}
For the purpose of comparison, we consider the robust portfolio optimization model~\citep{Ben_Tal_1999} with the cardinality constraint: 
\begin{subequations}\label{prob:RO}
\begin{align}
    \maximize_{\bm x, \bm z} \quad &\min_{\bm \xi\in \mathcal{U}(\hat{\bm \mu}, \hat{\bm \Sigma}, \delta)}~\bm \xi^\top \bm x\\
    \subjectto \quad & z_n  = 0 ~\Rightarrow~ x_n = 0 \quad(\forall n\in [N]),\\
        \quad & \bm x\in \mathcal{X},\quad \bm z \in \mathcal{Z}_N^k,
\end{align}
\end{subequations}
where the ellipsoidal uncertainty set of asset returns is defined as   
\begin{equation*}
    \mathcal{U}(\hat{\bm \mu}, \hat{\bm \Sigma}, \delta) \coloneqq \left\{\bm \xi \in \mathbb{R}^N \mid (\bm \xi-\hat{\bm \mu})^\top \hat{\bm \Sigma}^{-1}(\bm \xi-\hat{\bm \mu}) \leq \delta \right\},
\end{equation*}
and $\delta \ge 0$ is a user-defined uncertainty parameter. 
This model can be posed as a mixed-integer second-order cone optimization problem~\citep{Ben_Tal_1999}.

We compare the out-of-sample investment performance of portfolios determined with the following optimization models:
\begin{description}
    \item[EW:] equally weighted portfolio (i.e., $x_n = 1/N$ for $n\in [N]$);
    \item[RO:] cardinality-constrained robust optimization model~\eqref{prob:RO}, which was solved using Gurobi Optimizer 8.1.11; 
    \item[DR:] cardinality-constrained distributionally robust optimization model~\eqref{prob:reg_dist_robust_primal}, which was solved using our cutting-plane algorithm CPA${}_+$. 
\end{description}
For the DR model, we tuned the ambiguity parameters $(\kappa_1, \kappa_2)$ on the basis of the 80\% confidence region estimated by the tailored bootstrap method~\citep{Bertsimas2017} with 100,000 bootstrap samples. 
We set $\gamma=10/\sqrt{N}$ for the $\ell_2$-regularization term and the utility function~\eqref{eq:utility_func} in the same way as in \Cref{sec:expdes1}.
For the RO model, we set $\delta =  \kappa_1$ for the uncertainty parameter.

We evaluated the out-of-sample investment performance on the basis of a rolling horizon strategy. 
For the \texttt{nikkei225}, \texttt{ind49}, and, \texttt{sbm100} datasets, we downloaded weekly data of stock returns from January 2000 to December 2020. 
We solved portfolio optimization models with sample estimates $\hat{\bm \mu}$ and $\hat{\bm \Sigma}$ in the first training period (156 weeks). 
Only for the \texttt{sbm100} dataset, we used the shrinkage estimation method~\citep{Chen2010} of $\hat{\bm \Sigma}$ to make it positive definite. 
We then calculated weekly returns by applying the obtained portfolios to the subsequent testing period (52 weeks). 
We repeated this process at intervals of 52 weeks until the end of the entire data period. 

The following column labels are used in \Cref{tab:oos_ind49,tab:oos_sbm100,tab:oos_nikkei}:
\begin{description}
    \item[Ave:] average portfolio return, 
    \item[Std:] standard deviation of portfolio returns, 
    \item[VaR:] 90\%-value-at-risk~\citep{Jorion2006} of portfolio losses,
    \item[CVaR:] 90\%-conditional value-at-risk~\citep{Rockafellar2002} of portfolio losses,
    \item[SR:] Sharpe ratio of portfolio returns,  
\end{description}
where portfolio returns are expressed as percentages, and portfolio losses are defined by the negative portfolio returns. 
Note that the best values of each performance measure are indicated in bold for each dataset.
When calculating the Sharpe ratio, we set the risk-free rate to $0.123$\% for the \texttt{ind49} and \texttt{sbm100} datasets and to zero for the \texttt{nikkei225} dataset according to the yields on 10-year US and Japanese government bonds\footnote{\url{https://www.investing.com/}} as of January 2000. 

\subsubsection{Results of investment performance}
\Cref{tab:oos_ind49,tab:oos_sbm100,tab:oos_nikkei} give the out-of-sample investment performance for the \texttt{nikkei225}, \texttt{ind49}, and \texttt{sbm100} datasets with the cardinality parameter $k\in \{5,N\}$.  
Results of optimal investment weights are given in \ref{apd:d}.

For the \texttt{nikkei225} dataset ($N=30$), the EW model had the highest average return and Sharpe ratio, whereas the DR models attained the smallest values of risk measures (i.e., standard deviation, 90\%-VaR, and 90\%-CVaR). 
For the \texttt{ind49} dataset ($N=49$), the DR models were clearly superior to the other models in the risk measures, and the DR model with $k=5$ achieved the highest Sharpe ratio. 
Also for the \texttt{sbm100} dataset ($N=100$), the DR model with $k=5$ outperformed the other models in the risk measures and the Sharpe ratio. 

We can see from these results that distributionally robust portfolio optimization models have an advantage of reducing risks, which is consistent with the implication provided by \citet{Gotoh2021}. 
Note also that the cardinality constraint improved average returns of DR models for all the datasets. 
Consequently, especially when the number of investable assets is large, our portfolio optimization model is capable of realizing low-risk high-return investments in terms of the Sharpe ratio. 

\begin{table}[ht]
    \centering
     \renewcommand{\arraystretch}{0.8}
        \caption{Out-of-sample investment performance for the \texttt{nikkei225} dataset ($N=30$)}
        \label{tab:oos_nikkei}
        \small
    \begin{tabular}{lrrrrrrrrr}
    \toprule
Model &$k$	                &Ave(\%)	&Std(\%)  &VaR(\%)	&CVaR(\%) &SR	    \\ \midrule
EW	               &        &$\textbf{0.216}$	&$2.806$		&$6.918$	&$10.561$ &$\textbf{0.077}$\\  \midrule
RO	   &5       &$0.201$	&$3.176$	&$7.964$	&$11.437$ &$0.063$\\ 
      &$N$	    &$0.194$	&$2.965$	&$7.332$	&$10.863$ &$0.065$\\ \midrule
DR                  &5	    &$0.191$	&$2.692$	&$6.295$	&$10.452$ &$0.071$\\ 
                  &$N$    	&$0.179$	&$\textbf{2.544}$	&$\textbf{5.830}$	&$\textbf{10.023}$ &$0.070$\\ \bottomrule 
    \end{tabular}
\end{table}
\begin{table}[ht]
    \centering
        \caption{Out-of-sample investment performance for the \texttt{ind49} dataset ($N=49$)}
        \label{tab:oos_ind49}
        \small
             \renewcommand{\arraystretch}{0.8}
        \begin{tabular}{lrrrrrrrrr}
    \toprule
Model &$k$	                &Ave(\%)	&Std(\%)  &VaR(\%)	&CVaR(\%) &SR	    \\ \midrule
EW	               &        &$0.212$	&$2.701$	&$8.219$	&$11.515$&$0.033$\\   \midrule
RO	   &5       &$\textbf{0.238}$	&$2.873$		&$7.836$	&$11.398$	&$0.040$\\  
      &$N$	    &$0.236$	&$2.771$	&$7.931$	&$10.929$ &$0.041$\\  \midrule
DR                  &5	    &$0.220$	&$2.044$	&$5.361$	&$9.731$&$\textbf{0.047}$\\  
                  &$N$    	&$0.212$	&$\textbf{2.031}$	&$\textbf{5.022}$	&$\textbf{9.643}$&$0.044$\\  \bottomrule 
    \end{tabular}
\end{table}
\begin{table}[ht]
    \centering
        \caption{Out-of-sample investment performance for the \texttt{sbm100} dataset ($N=100$)}
    \label{tab:oos_sbm100}
    \small
    \renewcommand{\arraystretch}{0.8}
    \begin{tabular}{lrrrrrrrrr}
    \toprule
Model &$k$	                &Ave(\%)	&Std(\%)  &VaR(\%)	&CVaR(\%) &SR	    \\ \midrule
EW	               &        &$0.211$	&$2.982$		&$8.873$	&$12.292$ &$0.029$\\  \midrule
RO	   &5       &$\textbf{0.212}$	&$2.966$		&$9.232$	&$11.960$ &$0.030$\\  
      &$N$	                &$0.204$	&$2.864$	&$8.519$	&$11.912$ &$0.028$\\  \midrule
DR                  &5	    &$0.198$	&$\textbf{2.318}$	&$\textbf{6.019}$	&$\textbf{9.876}$ &$\textbf{0.032}$\\  
                  &$N$    	&$0.191$	&$2.340$	&$6.373$	&$10.120$ &$0.029$\\ \bottomrule 
    \end{tabular}
\end{table}

\color{black}
\section{Conclusion}\label{sec:concl}
We considered the moment-based distributionally robust portfolio optimization model with the cardinality constraint. 
Due to the cardinality constraint, we formulated this model as an MISDO problem, which is very hard to solve exactly when the number of investable assets is large. 
We reformulated the problem as a bilevel optimization problem and devised a specialized cutting-plane algorithm for solving the upper-level problem. 
\textcolor{black}{
We also improved computational efficiency of generating cutting planes by reducing the size of the dual lower-level SDO problem through the effective use of the positive semidefinite matrix completion~\citep{Fukuda2001,Nakata2003}.}

The computational results indicate that our cutting-plane algorithm was significantly accelerated by matrix-completion-based problem reduction. 
Our algorithm was also faster than the state-of-the-art MISDO solver SCIP-SDP when it finished computations within the time limit. 
Our algorithm succeeded in giving an optimal solution within a few minutes to problem instances involving 225 assets. 
It also found a feasible solution of good quality to large-sized problem instances involving 468 assets. 
\textcolor{black}{
In addition, the cardinality constraint improved the out-of-sample Sharpe ratio of the distributionally robust portfolio optimization model. 
As a result, our portfolio optimization model outperformed the conventional robust portfolio optimization model~\citep{Ben_Tal_1999} in many performance measures. 
}

For future work, we expect that our cutting-plane algorithm can be extended to other distributionally robust optimization models. 
For example, various robust portfolio optimization models have been proposed~\citep{fabozzi2010robust}, and discrepancy- and kernel-based ambiguity sets have been used~\citep{Bertsimas2020_from_predictive,Zhao2018} as an alternative to the moment-based ambiguity set of probability distributions. 
\textcolor{black}{
It is also required to extend our cutting-plane algorithm such that it can deal with short selling, which has been considered in the literature~\citep{Bodnar2017,Javed2020,Khodamoradi_2020}.}
As another direction, it is promising to integrate the matrix completion technique into general-purpose MISDO algorithms~\citep{Coey2020,Gally2017,Kobayashi2019} for better computational performance.

\bibliographystyle{apalike}
\bibliography{bibliography}

\appendix
\section{Proof of \Cref{thm:f_dual}}\label{sec:proof_f_dual}
The Lagrange function of Problem~\eqref{prob:lower_primal} is expressed as
\begin{align*}
     &\mathcal{L}(\bm x,\bm P, \bm Q, \bm p, \bm q, r,s;\bm \alpha, \bm B,\bm \beta, \bm \eta, \bm \Lambda, \bm \lambda, \nu, \pi, \bm \rho)\\
     &\coloneqq \frac{1}{2\gamma} \bm x^{\top} \bm x + (\kappa_2 \hat{\bm \Sigma}  - \hat{\bm \mu} \hat{\bm \mu}^\top )\bullet \bm Q + r + \hat{\bm \Sigma} \bullet \bm P  - 2 \hat{\bm \mu}^\top \bm p + \kappa_1 s\\
     &\quad - \bm \alpha^\top (\bm{p} +\bm q /2 +\bm Q \hat{\bm{\mu}})\\
     &\quad - \sum_{\ell\in [L]} 
     \begin{pmatrix}
     \bm B^{(\ell)} &\bm \beta^{(\ell)}\\
     (\bm \beta^{(\ell)})^\top & \eta^{(\ell)}
    \end{pmatrix}\bullet \begin{pmatrix}
        \bm Q & \bm q/2+a^{(\ell)}\bm Z\bm x/2\\
        (\bm q/2+a^{(\ell)}\bm Z\bm x/2)^\top &r+ b^{(\ell)}
        \end{pmatrix}\\
        &
        \quad- \begin{pmatrix}
     \bm \Lambda &\bm \lambda\\
     \bm \lambda^\top & \nu
    \end{pmatrix}
    \bullet \begin{pmatrix}
            \bm P &\bm p\\
            \bm p^\top &s
            \end{pmatrix}
        - \pi (\bm 1^\top \bm Z \bm x-1) - \bm \rho^\top \bm Z\bm x,
\end{align*}
where $\bm \alpha \in \mathbb{R}^N$, $\begin{pmatrix}
     \bm B^{(\ell)} &\bm \beta^{(\ell)}\\
     (\bm \beta^{(\ell)})^\top & \eta^{(\ell)}
    \end{pmatrix}\succeq \bm O~(\ell \in [L])$, $\begin{pmatrix}
     \bm \Lambda &\bm \lambda\\
     \bm \lambda^\top & \nu
    \end{pmatrix}\succeq \bm O$, $\pi \in \mathbb{R}$, and $\bm \rho \geq \bm 0$ are Lagrange multipliers. 
The Lagrange dual of Problem~\eqref{prob:lower_primal} is then posed as
 \begin{align}\label{prob:lagrange_dual}
    \max_{\bm \alpha, \bm B, \bm \beta, \bm \eta, \bm \Lambda, \bm \lambda,\nu,\pi, \bm \rho}&~\min_{\bm x, \bm P, \bm Q, \bm p, \bm q, r, s}~\mathcal{L}(\bm x, \bm P, \bm Q, \bm p, \bm q, r,s;\bm \alpha, \bm B,\bm \beta, \bm \eta, \bm \Lambda, \bm \lambda, \nu, \pi,\bm \rho).
\end{align}
Now, let us focus on the inner minimization problem: 
\begin{equation}\label{prob:relaxed_prob}
    \min_{{\bm x, \bm P, \bm Q, \bm p, \bm q, r, s}}~\mathcal{L}(\bm x, \bm P, \bm Q, \bm p, \bm q, r,s;\bm \alpha, \bm B,\bm \beta, \bm \eta, \bm \Lambda, \bm \lambda, \nu, \pi, \bm \rho).
\end{equation}
Note that Problem \eqref{prob:relaxed_prob} is an unconstrained convex quadratic optimization problem, and its objective function is linear in $(\bm P, \bm Q, \bm p, \bm q, r,s)$.
Since Problem \eqref{prob:relaxed_prob} must be bounded, the Lagrange multipliers are required to satisfy the following conditions:
\begin{align}
    \nabla_{\bm P}\mathcal{L} &= \hat{\bm \Sigma} -\bm \Lambda = \bm O,\label{dual:cond1}\\
    \nabla_{\bm Q}\mathcal{L} &= \kappa_2 \hat{\bm \Sigma}  - \hat{\bm \mu} \hat{\bm \mu}^\top -\frac{1}{2}(\hat{\bm \mu}\bm \alpha^\top + \bm \alpha\hat{\bm \mu}^\top)-\sum_{\ell\in [L]}\bm B^{(\ell)}=\bm O,\\
    \nabla_{\bm p}\mathcal{L} &=-2\hat{\bm \mu}-\bm \alpha -2\bm \lambda = \bm 0,\\
    \nabla_{\bm q}\mathcal{L} &= -\frac{1}{2}\bm \alpha - \sum_{\ell\in [L]}\bm \beta^{(\ell)} = \bm 0,\\
    \nabla_{r}\mathcal{L} &= 1 - \sum_{\ell\in [L]}\eta^{(\ell)} = 0,\\
    \nabla_{s}\mathcal{L} &= \kappa_1 - \nu = 0.
\end{align}
The following optimality condition should also be satisfied:
\begin{align}
    \nabla_{\bm x}\mathcal{L} &= \frac{1}{\gamma}\bm x - \bm Z\left(\sum_{\ell\in [L]}a^{(\ell)} \bm \beta^{(\ell)} +\pi\bm 1 + \bm \rho \right) = \bm 0.\label{dual:cond6}
\end{align}
According to conditions \eqref{dual:cond1}--\eqref{dual:cond6}, the optimal objective value of Problem \eqref{prob:relaxed_prob} is calculated as 
\begin{equation*}
 -\frac{\gamma}{2} \bm \omega^\top \bm Z^2 \bm \omega - \sum_{\ell\in [L]}\eta^{(\ell)}b^{(\ell)} + \pi,
\end{equation*}
where 
\begin{equation*}
    \bm \omega = \sum_{\ell\in [L]}a^{(\ell)} \bm \beta^{(\ell)} +\pi\bm 1+\bm \rho.
\end{equation*}
Since 
$\bm \omega^\top \bm Z^2\bm \omega = \bm z^\top (\bm \omega\circ \bm \omega)$, the Lagrange dual~\eqref{prob:lagrange_dual} of Problem \eqref{prob:lower_primal} is formulated as
\begin{subequations}\label{prob:lagrange_dual_tmp}
\begin{align}
\maximize_{\bm \omega,\bm \alpha, \bm B, \bm \beta, \bm \eta,\bm  \Lambda,\bm \lambda,\nu,\pi,\bm \rho} &\quad -\frac{\gamma}{2} \bm z^\top (\bm \omega\circ \bm \omega) -\sum_{\ell\in [L]} \eta^{(\ell)} b^{(\ell)} + \pi\\
\subjectto & \quad \bm \omega = \sum_{\ell\in [L]}a^{(\ell)} \bm \beta^{(\ell)} +\pi\bm 1 + \bm \rho,\label{const:rho}\\
&\quad \hat{\bm \Sigma} - \bm \Lambda = \bm O,\label{const:lambda}\\
&\quad \kappa_2 \hat{\bm \Sigma}  - \hat{\bm \mu} \hat{\bm \mu}^\top - \frac{1}{2}(\hat{\bm \mu}\bm \alpha^\top+\bm \alpha\hat{\bm \mu}^\top) -\sum_{\ell\in [L]}\bm B^{(\ell)}=\bm O,\label{const:alpha1}\\
&\quad -2\hat{\bm \mu}-\bm \alpha -2\bm \lambda = \bm 0, \label{const:alpha2}\\
&\quad -\frac{1}{2}\bm \alpha - \sum_{\ell\in [L]}\bm \beta^{(\ell)} = \bm 0,\label{const:alpha3}\\
&\quad  1 - \sum_{\ell\in [L]}\eta^{(\ell)} = 0,\\
&\quad  \kappa_1 - \nu = 0,\label{const:nu}\\
&\quad  \begin{pmatrix}
     \bm B^{(\ell)} &\bm \beta^{(\ell)}\\
     (\bm \beta^{(\ell)})^{\top} & \eta^{(\ell)}
    \end{pmatrix}\succeq \bm O\quad (\ell \in [L]),\\
&\quad \begin{pmatrix}
     \bm \Lambda &\bm \lambda\\
     \bm \lambda^\top & \nu
    \end{pmatrix}\succeq \bm O,\\
&\quad \bm \rho \geq \bm 0.
\end{align}
\end{subequations}
We then delete $\bm \rho$, $\bm \Lambda$ $\bm \alpha$, and $\nu$ from Problem~\eqref{prob:lagrange_dual_tmp} by substituting Eqs.~\eqref{const:rho}, \eqref{const:lambda}, \eqref{const:alpha3}, and \eqref{const:nu} into other constraints. 
We now obtain the desired formulation \eqref{prob:lower_dual}.  

To prove the strong duality~\citep{Alizadeh1995}, 
we next show that both the primal problem~\eqref{prob:lower_primal} and the dual problem~\eqref{prob:lower_dual} are strictly feasible. 
Let us set $\bm x = \bm z/(\bm 1^{\top} \bm z)$, $\bm q = \bm 0$, $r > -\min_{\ell\in [L]} \{b^{(\ell)}\}$, and $s>0$ in the primal problem~\eqref{prob:lower_primal}.
Then there exists $\bm Q$ satisfying 
\begin{equation*}
    \begin{pmatrix}
        \bm Q & a^{(\ell)}\bm Z\bm x/2\\
        (a^{(\ell)}\bm Z\bm x/2)^\top &r+ b^{(\ell)}
        \end{pmatrix} \succ \bm O \quad (\forall \ell \in [L]).
\end{equation*}
We next set $\bm p = - \bm Q \hat{\bm \mu}$, then there exists $\bm P$ satisfying
\begin{equation*}
    \begin{pmatrix}
            \bm P &\bm p\\
            \bm p^\top &s
            \end{pmatrix} \succ \bm O,
\end{equation*}
which implies that the primal problem \eqref{prob:lower_primal} is strictly feasible. 

For the dual problem \eqref{prob:lower_dual}, we set $\bm \lambda = \bm 0$ and $(\bm B,\bm \beta, \bm \eta)$ as
\begin{align*}
    \bm B^{(\ell)} = \frac{1}{L}(\kappa_2 \hat{\bm \Sigma}+\hat{\bm \mu}\hat{\bm \mu}^\top),\quad
    \bm \beta^{(\ell)} =\frac{1}{L}\hat{\bm \mu},\quad
    \eta^{(\ell)} = \frac{1}{L} \quad (\forall \ell\in [L]) 
\end{align*}
such that constraints \eqref{dual_const_eq:1}--\eqref{dual_const_eq:3} are satisfied. 
We next set $\bm \omega > \sum_{\ell\in [L]}a^{(\ell)} \bm \beta^{(\ell)} +\pi\bm 1$. 
As for constraint \eqref{dual_const_sd:1}, we have 
\begin{align}
     \begin{pmatrix}
     \bm B^{(\ell)} &\bm \beta^{(\ell)}\\
     (\bm \beta^{(\ell)})^{\top} & \eta^{(\ell)}
    \end{pmatrix} &= \frac{1}{L}\begin{pmatrix}\kappa_2 \hat{\bm \Sigma}+\hat{\bm \mu}\hat{\bm \mu}^\top & \hat{\bm \mu}\\
                                                \hat{\bm \mu}^\top & 1 
                                \end{pmatrix} \succ \bm O \quad (\forall \ell \in [L]) \label{eq:tmp_mat}
\end{align}
because the Schur complement~\citep{Boyd2009} is given by 
\begin{equation}
\frac{1}{L}\left(\kappa_2 \hat{\bm \Sigma}+\hat{\bm \mu}\hat{\bm \mu}^\top - \hat{\bm \mu}\hat{\bm \mu}^\top\right) = \frac{\kappa_2}{L} \hat{\bm \Sigma} \succ \bm O
\end{equation}
from \Cref{asp:1}.
For constraint \eqref{dual_const_sd:2}, \Cref{asp:1} also ensures that
\begin{align}
        \begin{pmatrix}
        \hat{\bm \Sigma} & \bm 0\\
        \bm 0 & \kappa_1
        \end{pmatrix} \succ \bm O\label{pd},
\end{align}
which shows that the dual problem \eqref{prob:lower_dual} is strictly feasible.

\section{Proof of \Cref{lem:bar_beta_ineq}}\label{sec:proof_bar_beta_ineq}
We assume without loss of generality that $\eta^{(\ell)}>0$ for all $\ell  \in [L]$ and that 
\begin{equation} \label{eq:z_partition}
        \bm z = \begin{pmatrix}
        \bm 1\\
        \bm 0
        \end{pmatrix},~\bm 1 \in \mathbb{R}^k,~\bm 0 \in \mathbb{R}^{N-k}.
\end{equation}
For notational simplicity, we partition vector $\bm v \in \mathbb{R}^N$ and symmetric matrix $\bm M \in \mathcal{S}^{N}$ in  accordance with $\bm z$ as follows:
\begin{align}
\bm v & = \begin{pmatrix}
        \bm v_1\\
        \bm v_2
        \end{pmatrix},~\bm v_1 =\bm v_{\bm z},~\bm v_2=\bm v_{\bm 1-\bm z}, \label{eq:v_partition} \\
\bm M & = \begin{pmatrix}
        \bm M_{11} & \bm M_{12}\\
        \bm M_{21} & \bm M_{22}
        \end{pmatrix},~\bm M_{11} = \bm M_{\bm z,\bm z},~\bm M_{12} = \bm M_{21}^{\top} = \bm M_{\bm z, \bm 1-\bm z},~\bm M_{22}=\bm M_{\bm 1-\bm z,\bm 1-\bm z}. \label{eq:M_partition}
\end{align}

We define $\bm \phi^{(\ell)}\coloneqq \bar{\bm \beta}^{(\ell)}-\eta^{(\ell)}\hat{\bm \mu}$ for $\ell\in [L]$, and due to definition~\eqref{def:beta}, we have
\begin{align}
    \bm \phi^{(\ell)}_1 &=\bm \beta_1^{(\ell)}-\eta^{(\ell)}\hat{\bm \mu}_1,\label{eq:phi1}\\
    \bm \phi^{(\ell)}_2 &=\left(\hat{\bm \Sigma}_{21 }(\hat{\bm \Sigma}_{11})^{-1}(\bm \beta_{1} ^{(\ell)}-\eta^{(\ell)}\hat{\bm \mu}_{1} )+\eta^{(\ell)}\hat{\bm \mu}_{2}\right)-\eta^{(\ell)}\hat{\bm \mu}_{2} = \bm \Psi \bm \phi^{(\ell)}_1,
\end{align}
where 
\begin{equation} \label{eq:Psi}
\bm \Psi\coloneqq\hat{\bm \Sigma}_{21 }(\hat{\bm \Sigma}_{11})^{-1}.
\end{equation}
Substituting them into the right side of Eq.~\eqref{eq:matrix_ineq}, we obtain
\begin{align}
    & \sum_{\ell\in [L]}\frac{1}{\eta^{(\ell)}}(\bar{\bm \beta}^{(\ell)}-\eta^{(\ell)}\hat{\bm \mu})(\bar{\bm \beta}^{(\ell)}-\eta^{(\ell)}\hat{\bm \mu})^\top \notag \\
    =&\sum_{\ell\in [L]}\frac{1}{\eta^{(\ell)}}\begin{pmatrix}
    \bm \phi^{(\ell)}_1(\bm \phi^{(\ell)}_1)^\top  & \bm \phi^{(\ell)}_1(\bm \phi^{(\ell)}_2)^\top  \\
     \bm \phi^{(\ell)}_2(\bm \phi^{(\ell)}_1)^\top  & \bm \phi^{(\ell)}_2(\bm \phi^{(\ell)}_2)^\top  
    \end{pmatrix}
    =\begin{pmatrix}
    \bm \Phi &\bm \Phi \bm \Psi^\top\\
    \bm \Psi \bm \Phi & \bm \Psi \bm \Phi \bm \Psi^\top
    \end{pmatrix}\label{eq:phi_ineq},
\end{align}
where
\begin{equation}\label{eq:Phi}
\bm \Phi\coloneqq\sum_{\ell\in [L]}\frac{1}{\eta^{(\ell)}}\bm \phi^{(\ell)}_1(\bm \phi^{(\ell)}_1)^\top.
\end{equation}

From constraint~\eqref{lower_dual_reduced_const_sd:1}, we can use the Schur complement property~\citep{Boyd2009}: 
\begin{equation}\label{eq:Schur1}
    \begin{pmatrix}
     \bm B^{(\ell)}_{11} &\bm \beta^{(\ell)}_{1}\\
     (\bm \beta_{1}^{(\ell)})^{\top} & \eta^{(\ell)}
    \end{pmatrix}\succeq \bm O~\Rightarrow~
    \bm B^{(\ell)}_{11}\succeq \frac{1}{\eta^{(\ell)}}\bm \beta^{(\ell)}_{1} (\bm \beta^{(\ell)}_{1})^\top \quad (\forall \ell \in [L]).
\end{equation}
It then follows 
that 
\begin{align}
    \bm \Phi 
    &= \sum_{\ell\in [L]}\frac{1}{\eta^{(\ell)}}\bm \beta_1^{(\ell)}(\bm \beta_1^{(\ell)})^\top - \left(\sum_{\ell\in [L]}\bm \beta_1^{(\ell)}\right)\hat{\bm \mu}_1^\top - \hat{\bm \mu}_1\left(\sum_{\ell\in [L]}\bm \beta_1^{(\ell)}\right)^\top  + \hat{\bm \mu}_1\hat{\bm \mu}_1^\top \quad \because \mathrm{Eqs.~ \eqref{lower_dual_reduced_const_eq:3}, \eqref{eq:phi1}, \eqref{eq:Phi}} \notag\\ 
    &\preceq \sum_{\ell\in [L]}\bm B^{(\ell)}_{11} + \hat{\bm \mu}_1\hat{\bm \mu}_1^\top -\hat{\bm \mu}_1\left(\sum_{\ell\in [L]}\bm \beta_1^{(\ell)}\right)^\top - \left(\sum_{\ell\in [L]}\bm \beta_1^{(\ell)}\right)\hat{\bm \mu}_1^\top \quad \because \mathrm{Eq.~\eqref{eq:Schur1}} \notag \\ 
    &= \kappa_2\hat{\bm \Sigma}_{11}. \quad \because \mathrm{Eq.~\eqref{lower_dual_reduced_const_eq:1}} \label{eq:phi_eq}
\end{align}

From \Cref{asp:1}, 
we can also use the Schur complement property~\citep{Boyd2009}: 
\begin{equation}\label{eq:Schur2}
    \begin{pmatrix}
    \hat{\bm \Sigma}_{11} &\hat{\bm \Sigma}_{21}^\top\\
    \hat{\bm \Sigma}_{21 }&\hat{\bm \Sigma}_{22}
         \end{pmatrix} \succ \bm O ~\Rightarrow~ 
\hat{\bm \Sigma}_{22}-\hat{\bm \Sigma}_{21 }(\hat{\bm \Sigma}_{11})^{-1}\hat{\bm \Sigma}_{21}^\top \succ \bm O.
\end{equation}
We now obtain the desired result:
\begin{align}
       \sum_{\ell\in [L]}\frac{1}{\eta^{(\ell)}}(\bar{\bm \beta}^{(\ell)}-\eta^{(\ell)}\hat{\bm \mu})(\bar{\bm \beta}^{(\ell)}-\eta^{(\ell)}\hat{\bm \mu})^\top 
    & = \begin{pmatrix}
    \bm \Phi &\bm \Phi \bm \Psi^\top\\
    \bm \Psi \bm \Phi & \bm \Psi \bm \Phi \bm \Psi^\top
    \end{pmatrix} \quad \because \mathrm{Eq.~\eqref{eq:phi_ineq}}\notag \\
    & =
        \begin{pmatrix}
        \bm I\\
        \bm \Psi
        \end{pmatrix}
        \bm \Phi
        \begin{pmatrix}
        \bm I &\bm \Psi^\top
        \end{pmatrix}\notag
        \\
    & \preceq 
        \begin{pmatrix}
        \bm I\\
        \bm \Psi
        \end{pmatrix}
        \kappa_2\hat{\bm \Sigma}_{11}
        \begin{pmatrix}
        \bm I &\bm \Psi^\top
        \end{pmatrix} \quad \because \mathrm{Eq.~\eqref{eq:phi_eq}}  \notag
        \\
    & = \kappa_2\begin{pmatrix}
    \hat{\bm \Sigma}_{11} &\hat{\bm \Sigma}_{21}^\top\\
    \hat{\bm \Sigma}_{21 }&\hat{\bm \Sigma}_{21 }(\hat{\bm \Sigma}_{11})^{-1}\hat{\bm \Sigma}_{21}^\top
    \end{pmatrix} \quad \because \mathrm{Eq.~\eqref{eq:Psi}} \notag \\
    & \preceq \kappa_2\begin{pmatrix}
    \hat{\bm \Sigma}_{11} &\hat{\bm \Sigma}_{21}^\top\\
    \hat{\bm \Sigma}_{21 }&\hat{\bm \Sigma}_{22}
    \end{pmatrix}. \quad \because \mathrm{Eq.~\eqref{eq:Schur2}} \notag
\end{align}

\section{Proof of \Cref{lem:B_completion}}\label{sec:proof_B_completion}
For $\ell \in [L]$ such that $\eta^{(\ell)} = 0$, we have $\bm \beta_{\bm z}^{(\ell)} = \bm 0$ from the constraint \eqref{lower_dual_reduced_const_sd:1}, thus $\bar{\bm \beta}^{(\ell)}= \bm 0$ from the definition \eqref{def:beta}. 
In this case, the solution $(\bar{\bm B}^{(\ell)},\bar{\bm \beta}^{(\ell)},\eta^{(\ell)})=(\bm O,\bm 0, 0)$ satisfies the constraint \eqref{dual_const_sd:1} and can be left out of Eq.~\eqref{dual_const_eq:1}. 
Therefore, we can assume without loss of generality that $\eta^{(\ell)}>0$ for all $\ell \in [L]$ in Eqs.~\eqref{dual_const_eq:1} and \eqref{dual_const_sd:1}. 

From Eqs.~\eqref{lower_dual_reduced_const_eq:3}~and~\eqref{eq:matrix_ineq}, we can see that 
\begin{equation}\label{eq:tilde_B_sd1}
    \kappa_2\hat{\bm \Sigma} - \hat{\bm \mu}\hat{\bm \mu}^\top +\hat{\bm \mu}\left(\sum_{\ell\in [L]}\bar{\bm \beta}^{(\ell)}\right)^\top +  \left(\sum_{\ell\in [L]}\bar{\bm \beta}^{(\ell)}\right)\hat{\bm \mu}^\top \succeq \sum_{\ell\in [L]}\widetilde{\bm B}^{(\ell)},
\end{equation}
where
\[
\widetilde{\bm B}^{(\ell)} \coloneqq \frac{1}{\eta^{(\ell)}}\bar{\bm \beta}^{(\ell)}(\bar{\bm \beta}^{(\ell)})^\top \quad (\forall \ell \in [L]). 
\]
Then we have
\begin{equation}\label{eq:tilde_B_sd2}
    \begin{pmatrix}
    \widetilde{\bm B}^{(\ell)} &\bar{\bm \beta}^{(\ell)}\\
    (\bar{\bm \beta}^{(\ell)})^\top & \eta^{(\ell)}
    \end{pmatrix}
    = \frac{1}{\eta^{(\ell)}}\begin{pmatrix} \bar{\bm \beta}^{(\ell)} \\ \eta^{(\ell)}
    \end{pmatrix} \begin{pmatrix} \bar{\bm \beta}^{(\ell)} \\ \eta^{(\ell)}
    \end{pmatrix}^{\top}
    \succeq \bm O\quad(\forall \ell \in [L]). 
\end{equation}
From Eqs.~\eqref{eq:tilde_B_sd1} and \eqref{eq:tilde_B_sd2}, we can see that $\bar{\bm B}$ satisfying Eqs.~\eqref{dual_const_eq:1} and \eqref{dual_const_sd:1} is obtained by adding appropriate positive semidefinite matrices to $\widetilde{\bm B}^{(\ell)}$ for $\ell \in [L]$. 

\section{Proof of \Cref{thm:reduction}}\label{sec:proof_reduction}
As in the proof of \ref{sec:proof_bar_beta_ineq}, we use the notations~\eqref{eq:v_partition} and \eqref{eq:M_partition} under assumption~\eqref{eq:z_partition}. 
Suppose that $(\bm \omega, \bm B,\bm \beta,\bm \eta,\bm \lambda, \pi)$ is an optimal solution to Problem \eqref{prob:lower_dual}. 
Then $(\bm \omega_1, \bm B_{11},\bm \beta_1,\bm \eta,\bm \lambda_1, \pi)$ is a feasible solution to Problem \eqref{prob:lower_dual_reduced}, and its objective value is equal to $f(\bm z)$ due to Eq.~\eqref{eq:l2reg}.
This proves that $f(\bm z)\le f'(\bm z)$. 

Next, we show that $f(\bm z)\ge f'(\bm z)$. 
Let $(\bm \omega_1, \bm B_{11},\bm \beta_1,\bm \eta,\bm \lambda_1, \pi)$ be an optimal solution to Problem \eqref{prob:lower_dual_reduced}. 
From definition~\eqref{eq:def_bar_omega}, the constraint \eqref{dual_const_ineq:1} is satisfied by $(\bar{\bm \omega},\bar{\bm \beta}, \pi)$. 
The constraint \eqref{dual_const_eq:2} is satisfied by $(\bar{\bm \beta},\bar{\bm \lambda})$ as 
\begin{align*}
    \sum_{\ell\in [L]} \bar{\bm \beta}^{(\ell)} 
    &= \begin{pmatrix}
    \sum_{\ell\in [L]}\bm \beta_1^{(\ell)}\\
    \hat{\bm \Sigma}_{21}(\hat{\bm \Sigma}_{11})^{-1}\left(\sum_{\ell\in [L]}\bm \beta_1^{(\ell)}-\hat{\bm \mu}_1\right) + \hat{\bm \mu}_2
    \end{pmatrix} \quad \because \mathrm{Eqs.~\eqref{lower_dual_reduced_const_eq:3},\eqref{def:beta}}\\
    &= \begin{pmatrix}
    \bm \lambda_1 + \hat{\bm \mu}_1\\
    \hat{\bm \Sigma}_{21}(\hat{\bm \Sigma}_{11})^{-1}\bm \lambda_1+\hat{\bm \mu}_2
    \end{pmatrix} \quad \because \mathrm{Eq.~\eqref{lower_dual_reduced_const_eq:2}}\\
    &=\bar{\bm \lambda} + \hat{\bm \mu}. \quad \because \mathrm{Eq.~\eqref{eq:def_bar_lambda}}
\end{align*}
Since the constraint \eqref{lower_dual_reduced_const_sd:2} is satisfied, we can use the following Schur complement property (see, e.g., Section A.5.5 in~\citet{Boyd2009}):
\begin{equation}
    \begin{pmatrix}
     \hat{\bm \Sigma}_{11} &\bm \lambda_{1}\\
     \bm \lambda_{1}^\top & \kappa_1
    \end{pmatrix}\succeq \bm O ~\Rightarrow~ \kappa_1- \bm \lambda_1^\top\hat{\bm \Sigma}_{11}^{-1} \bm \lambda_1 \geq 0.\label{eq:Schur}
\end{equation}
From~Eq.~\eqref{eq:def_bar_lambda}, we have
\begin{equation*}
    \begin{pmatrix}
    \hat{\bm \Sigma}& \bar{\bm \lambda}\\
    \bar{\bm \lambda}^\top & \kappa_1
    \end{pmatrix} = 
    \begin{pmatrix}
    \hat{\bm \Sigma}_{11}&\hat{\bm \Sigma}_{12} &\bar{\bm \lambda}_{1}\\
    \hat{\bm \Sigma}_{21}&\hat{\bm \Sigma}_{22} &\bar{\bm \lambda}_{2}\\
    \bar{\bm \lambda}_{1}^\top &\bar{\bm \lambda}_{2}^\top &\kappa_1
    \end{pmatrix} = 
    \begin{pmatrix}
    \hat{\bm \Sigma}_{11}&\hat{\bm \Sigma}_{12} &\bm \lambda_{1}\\
    \hat{\bm \Sigma}_{21}&\hat{\bm \Sigma}_{22} &\hat{\bm \Sigma}_{21}\hat{\bm \Sigma}_{11}^{-1} \bm \lambda_1 \\
    \bm \lambda_{1}^\top &(\hat{\bm \Sigma}_{21}\hat{\bm \Sigma}_{11}^{-1} \bm \lambda_1 )^\top &\kappa_1
    \end{pmatrix}, 
\end{equation*}
which is positive semidefinite because
\begin{align*}
    &\begin{pmatrix}
    \bm I &\bm O &\bm 0\\
    \bm O &\bm I &\bm 0\\
    -\bm \lambda_1^\top \hat{\bm \Sigma}^{-1}_{11} &\bm 0^{\top} &1
    \end{pmatrix}
    \begin{pmatrix}
    \hat{\bm \Sigma}_{11}&\hat{\bm \Sigma}_{12} &\bm \lambda_{1}\\
    \hat{\bm \Sigma}_{21}&\hat{\bm \Sigma}_{22} &\hat{\bm \Sigma}_{21}\hat{\bm \Sigma}_{11}^{-1} \bm \lambda_1 \\
    \bm \lambda_{1}^\top &(\hat{\bm \Sigma}_{21}\hat{\bm \Sigma}_{11}^{-1} \bm \lambda_1 )^\top &\kappa_1
    \end{pmatrix}
    \begin{pmatrix}
    \bm I &\bm O &-\hat{\bm \Sigma}^{-1}_{11} \bm \lambda_1\\
    \bm O &\bm I &\bm 0\\
    \bm 0^{\top} &\bm 0^{\top} &1
    \end{pmatrix}
   \\
    = & \begin{pmatrix}
    \hat{\bm \Sigma}_{11} & \hat{\bm \Sigma}_{12}  & \bm 0\\
    \hat{\bm \Sigma}_{21}  & \hat{\bm \Sigma}_{22} &\bm 0\\
    \bm 0^{\top} &\bm 0^{\top} & \kappa_1- \bm \lambda_1^\top\hat{\bm \Sigma}_{11}^{-1}\bm \lambda_1
    \end{pmatrix} \succeq \bm O. \quad \because \mathrm{\Cref{asp:1},~Eq.~\eqref{eq:Schur}}
\end{align*}
Therefore, the constraint \eqref{dual_const_sd:2} is satisfied by $\bar{\bm \lambda}$. 

From \Cref{lem:B_completion}, there exists $\bar{\bm B}$ satisfying constraints~\eqref{dual_const_eq:1}~and~\eqref{dual_const_sd:1}.
Therefore, $(\bar{\bm \omega}, \bar{\bm B}, \bar{\bm \beta},\bm \eta,\bar{\bm \lambda}, \pi)$ is a feasible solution to Problem \eqref{prob:lower_dual}, and its objective value is equal to $f'(\bm z)$ due to Eq.~\eqref{eq:l2reg}. 
This implies that $f(\bm z) \geq f'(\bm z)$. 
As a result, we have $f(\bm z) = f'(\bm z)$; thus, $(\bar{\bm \omega}, \bar{\bm B}, \bar{\bm \beta},\bm \eta,\bar{\bm \lambda}, \pi)$ is an optimal solution to Problem \eqref{prob:lower_dual}.

\section{Illustration of utility function}\label{apd:utility}
\Cref{fig:utility function} illustrates the piecewise-linear approximation of the exponential utility function~\eqref{eq:exp_utility} with $L=3$.

\begin{figure}[ht]
    \centering
\begin{tikzpicture}[scale=3.5]
   \fill [black] (0,0) circle [radius=0.015];
\fill [black] (0.5,{1-1/exp(0.5)}) circle [radius=0.015];
   \fill [black] (1,{1-1/exp(1)}) circle [radius=0.015];
   \path[draw,->,>=latex] (0, 0) -- (1.5,0) node[right] {\small $y$};
   \path[draw,->,>=latex] (0, 0) -- (0,1) node[left=2mm] {\small $u$} ;
   \path (0,0) node[below left] {$\mathrm{O}$};
   \path[draw,domain=0:1.5,dashed] plot (\x, {1-exp(-\x/1)}) node[right] {\small $\tilde{u}(y)$};
   \path[draw,domain=0:1] plot (\x, {\x}) node[above]{\small $a^{(1)}y+b^{(1)}$};
   \path[draw,domain=0:1.5] plot (\x, {\x/exp(1/2)+1-3*exp(-1/2)/2})node[above right]{\small $a^{(2)}y+b^{(2)}$};
   \path[draw,domain=0:1.5] plot (\x, {\x/exp(1)+1-2/exp(1)})node[above right]{\small $a^{(3)}y+b^{(3)}$};
   \path[draw,dotted] (1,{1-1/exp(1)})--(0, {1-1/exp(1)}) node[left]{$\small  \frac{\mu_{\max}(1-\exp(-10))}{10}$};
   \path[draw,dotted] (0.5,{1-1/exp(0.5)})--(0, {1-1/exp(0.5)}) node[left]{$\small  \frac{\mu_{\max}(1-\exp(-5))}{10}$};
   \path[draw,dotted] (1/2,{1-1/exp(1/2)})--(1/2, 0) node[below]{\small $\frac{\mu_{\max}}{2}$};
   \path[draw,dotted] (1,{1-1/exp(1)})--(1, 0) node[below]{\small $\mu_{\max}$};
\end{tikzpicture}
       \caption{Piecewise-linear approximation of utility function~\eqref{eq:exp_utility} with $L=3$}
    \label{fig:utility function}
\end{figure}
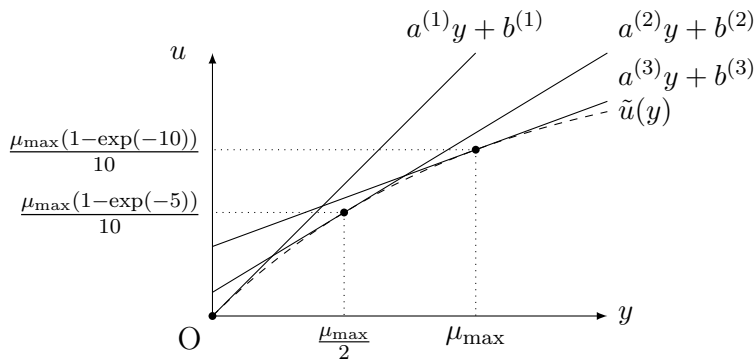

\section{Additional results of computational performance}\label{apd:c}

\subsection*{Results for different values of cardinality parameter $k$.}
\Cref{tab:result_k1,tab:result_k2} give the numerical results of each method for the cardinality parameter $k\in \{5,10,15,20,25\}$. 
We set $\gamma=10/\sqrt{N}$ for the $\ell_2$-regularization term and $(\kappa_1,\kappa_2)= (1,4)$ for the ambiguity set. 
We can see that CPA${}_+$ consistently outperformed the other methods regardless of the cardinality parameter $k$.

\if0
In \Cref{tab:result_k1,tab:result_k2}, when CPA${}_+$ finished the computations within the time limit,  CPA${}_+$  consistently outperformed the other methods regardless of the cardinality parameter $k$.
Indeed, for the \texttt{ind49} and \texttt{sbm100} datasets, CPA${}_+$ was the fastest  with all $k$~(\Cref{tab:result_k1}). 
For the \texttt{port2} dataset with $k=25$, CPA${}_+$ was the only method that completed the computation within the time limit. 
While CPA${}_+$ was terminated due to the time limit for the \texttt{port2} dataset with $k\in \{5,10,15,20\}$, CPA${}_+$ found solutions of better quality than the other methods~(\Cref{tab:result_k1}). 
For the large-sized \texttt{port5} and \texttt{s\&p500} datasets, only CPA${}_+$ worked normally~(\Cref{tab:result_k2}). 
 For the \texttt{port5} dataset, CPA${}_+$ succeeded in computing solutions with guaranteed optimality for all $k$.  
In addition, CPA${}_+$ provided good-quality solutions to the problem instances for the \texttt{s\&p500} dataset, while SCIP and CPA failed to obtain a feasible solution due to the out-of-memory condition or time limit.
\fi

\begin{table}[p]
    \centering
        \caption{Numerical results for the \texttt{ind49}, \texttt{port2}, and \texttt{sbm100} datasets with $\gamma=10/\sqrt{N}$ and $(\kappa_1,\kappa_2)= (1,4)$ for $k\in \{5,10,15, 20, 25\}$}
        \label{tab:result_k1}
     \footnotesize
      \renewcommand{\arraystretch}{0.90}
\begin{tabular}{lrrlrrrrrr}
\toprule
Data &$N$& $k$ 	&Method	 &Obj	 &Gap(\%)	&Time 	&\#Cuts 	&\#Nodes\\ \hline
\texttt{ind49}	&49	&5	&SCIP	&3.108	&0.0	&1365.8	&---	&61\\
	&	&	&CPA	&3.108	&24.1	&$>$3600	&$>$167	&$>$785\\
	&	&	&CPA${}_+$	&3.108	&0.0	&\textbf{40.4}	&211	&1000\\ \cline{3-9} 
	&	&10	&SCIP	&3.034	&0.0	&663.2	&---	&24\\
	&	&	&CPA	&3.034	&0.0	&469.6	&22	&20\\
	&	&	&CPA${}_+$	&3.034	&0.0	&\textbf{12.0}	&22	&20\\ \cline{3-9} 
	&	&15	&SCIP	&3.034	&0.0	&1049.2	&---	&49\\
	&	&	&CPA	&3.033	&0.0	&127.7	&6	&1\\
	&	&	&CPA${}_+$	&3.033	&0.0	&\textbf{6.6}	&6	&1\\ \cline{3-9} 
	&	&20	&SCIP	&3.033	&0.0	&502.8	&---	&27\\
	&	&	&CPA	&3.033	&0.0	&107.9	&5	&1\\
	&	&	&CPA${}_+$	&3.033	&0.0	&\textbf{10.3}	&5	&1\\ \cline{3-9} 
	&	&25	&SCIP	&3.033	&0.0	&542.3	&---	&35\\
	&	&	&CPA	&3.033	&0.0	&106.6	&5	&1\\
	&	&	&CPA${}_+$	&3.033	&0.0	&\textbf{16.9}	&5	&1\\ \hline 
 \texttt{port2}	&89	&5	&SCIP	&2.217	&27.5	&$>$3600	&---&$>$14\\
	&	&	&CPA	&2.181	&100.0	&$>$3600	&$>$28	&$>$110\\
	&	&	&CPA${}_+$	&\textbf{2.027}	&71.7	&$>$3600	&$>$16645	&$>$155410\\ \cline{3-9} 
	&	&10	&SCIP	&1.865	&13.8	&$>$3600	&---&$>$17\\
	&	&	&CPA	&1.924	&100.0	&$>$3600	&$>$28	&$>$91\\
	&	&	&CPA${}_+$	&\textbf{1.726}	&55.2	&$>$3600	&$>$6380	&$>$110529\\ \cline{3-9} 
	&	&15	&SCIP	&1.859	&13.5	&$>$3600	&---&$>$14\\
	&	&	&CPA	&2.009	&96.4	&$>$3600	&$>$24	&$>$51\\
	&	&	&CPA${}_+$	&\textbf{1.637}	&19.4	&$>$3600	&$>$3027	&$>$51474\\ \cline{3-9} 
	&	&20	&SCIP	&1.797	&10.6	&$>$3600	&---&$>$13\\
	&	&	&CPA	&1.866	&100.0	&$>$3600	&$>$24	&$>$151\\
	&	&	&CPA${}_+$	&\textbf{1.611}	&1.9	&$>$3600	&$>$1665	&$>$22962\\ \cline{3-9} 
	&	&25	&SCIP	&1.783	&9.9	&$>$3600	&---&$>$18\\
	&	&	&CPA	&\textbf{1.607}	&0.0	&3281.5	&21	&6\\
	&	&	&CPA${}_+$	&\textbf{1.607}	&0.0	&\textbf{77.3}	&22	&12\\ \hline 
 \texttt{sbm100}	&100	&5	&SCIP	&4.493	&12.4	&$>$3600	&---&$>$5\\
	&	&	&CPA	&3.940	&0.3	&$>$3600	&$>$12	&$>$1\\
	&	&	&CPA${}_+$	&3.940	&0.0	&\textbf{2.7}	&13	&8\\ \cline{3-9} 
	&	&10	&SCIP	&3.970	&0.9	&$>$3600	&---&$>$9\\
	&	&	&CPA	&3.935	&0.0	&1491.5	&5	&1\\
	&	&	&CPA${}_+$	&3.935	&0.0	&\textbf{2.6}	&5	&1\\ \cline{3-9} 
	&	&15	&SCIP	&3.972	&0.9	&$>$3600	&---&$>$5\\
	&	&	&CPA	&3.935	&0.0	&1810.1	&6	&1\\
	&	&	&CPA${}_+$	&3.935	&0.0	&\textbf{5.2}	&5	&1\\ \cline{3-9} 
	&	&20	&SCIP	&3.952	&0.2	&$>$3600	&---&$>$8\\
	&	&	&CPA	&3.935	&0.0	&1626.7	&5	&1\\
	&	&	&CPA${}_+$	&3.935	&0.0	&\textbf{10.5}	&5	&1\\ \cline{3-9} 
	&	&25	&SCIP	&4.595	&14.3	&$>$3600	&---&$>$5\\
	&	&	&CPA	&3.935	&0.0	&1558.0	&5	&1\\
	&	&	&CPA${}_+$	&3.935	&0.0	&\textbf{16.8}	&5	&1\\ 
\bottomrule
\end{tabular}
\end{table}
\begin{table}[p]
    \centering
        \caption{Numerical results for the \texttt{port5} and \texttt{s\&p500} datasets with $\gamma=10/\sqrt{N}$ and $(\kappa_1,\kappa_2)= (1,4)$ for $k\in \{5,10,15,20,25\}$}
        \label{tab:result_k2}
     \footnotesize
      \renewcommand{\arraystretch}{0.90}
\begin{tabular}{lrrlrrrrrr}
\toprule
Data &$N$& $k$ 	&Method	 &Obj	 &Gap(\%)	&Time 	&\#Cuts 	&\#Nodes\\ \hline
 \texttt{port5}	&225	&5	&SCIP	&$\infty$	&100.0	&$>$3600	&---&$>$1\\
    &   &   &CPA  & OM &--- &---&---&---\\
	&	&	&CPA${}_+$	&2.812	&0.0	&\textbf{164.6}	&840	&9285\\ \cline{3-9} 
	&	&10	&SCIP	&$\infty$	&100.0	&$>$3600	&---&$>$1\\
    &   &   &CPA  & OM &--- &---&---&---\\
    &	&	&CPA${}_+$	&2.687	&0.0	&\textbf{97.8}	&178	&1751\\ \cline{3-9} 
	&	&15	&SCIP	&$\infty$	&100.0	&$>$3600	&---&$>$1\\
	    &   &   &CPA  & OM &--- &---&---&---\\
	&	&	&CPA${}_+$	&2.677	&0.0	&\textbf{21.7}	&19	&34\\ \cline{3-9} 
	&	&20	&SCIP	&$\infty$	&100.0	&$>$3600	&---&$>$1\\
	    &   &   &CPA  & OM &--- &---&---&---\\
	    &	&	&CPA${}_+$	&2.677	&0.0	&\textbf{12.2}	&6	&1\\ \cline{3-9} 
	&	&25	&SCIP	&$\infty$	&100.0	&$>$3600	&---&$>$1\\
	    &   &   &CPA  & OM &--- &---&---&---\\
	    &	&	&CPA${}_+$	&2.677	&0.0	&\textbf{21.1}	&6	&1\\ \hline 
 \texttt{s\&p500}	&468	&5	&SCIP	&$\infty$	&---	&$>$3600	&---&$>$1\\
	    &   &   &CPA  & OM &--- &---&---&---\\
	    &	&	&CPA${}_+$	&\textbf{1.058}	&19.9	&$>$3600	&$>$8747	&$>$382504\\ \cline{3-9} 
	&	&10	&SCIP	&$\infty$	&---	&$>$3600	&---&$>$1\\
	    &   &   &CPA  & OM &--- &---&---&---\\
	&	&	&CPA${}_+$	&\textbf{0.888}	&15.3	&$>$3600	&$>$4962	&$>$398075\\ \cline{3-9} 
	&	&15	&SCIP	&$\infty$	&100.0	&$>$3600	&---&$>$1\\
	    &   &   &CPA  & OM &--- &---&---&---\\
	    &	&	&CPA${}_+$	&\textbf{0.833}	&10.0	&$>$3600	&$>$2948	&$>$157613\\ \cline{3-9} 
	&	&20	&SCIP	&$\infty$	&100.0	&$>$3600	&---&$>$1\\
	    &   &   &CPA  & OM &--- &---&---&---\\
	    &	&	&CPA${}_+$	&\textbf{0.809}	&7.2	&$>$3600	&$>$1702	&$>$83635\\ \cline{3-9} 
	&	&25	&SCIP	&$\infty$	&100.0	&$>$3600	&---&$>$1\\
	    &   &   &CPA  & OM &--- &---&---&---\\
	    &	&	&CPA${}_+$	&\textbf{0.789}	&5.5	&$>$3600	&$>$1079	&$>$48047\\ 
\bottomrule
\end{tabular}
\end{table}

\subsection*{Results for different values of ambiguity parameters $(\kappa_1,\kappa_2)$}
\Cref{tab:result_kappa} gives the numerical results of each method for pairs of ambiguity parameters $(\kappa_1,\kappa_2)\in \{(0.5,2), (1,4), (2,8)\}$. 
We set $k=10$ for the cardinality constraint and $\gamma=10/\sqrt{N}$ for the $\ell_2$-reguralization term.

\Cref{tab:result_kappa} shows that CPA${}_+$ was faster than the other methods when it completed the computations within the time limit regardless of the values of the ambiguity parameters $(\kappa_1,\kappa_2)$. 
We note that in the case of the \texttt{ind49} dataset with $(\kappa_1, \kappa_2) = (2,8)$, SCIP incorrectly determined that the problem instance was infeasible due to numerical instability.
For the \texttt{port2} dataset, all the three methods failed to complete computations within the time limit for all $(\kappa_1,\kappa_2)\in \{(0.5,2), (1,4), (2,8)\}$, but CPA${}_+$ found solutions of better quality than the other methods regardless of the values of $(\kappa_1,\kappa_2)$.  

The computation time of CPA${}_+$ was stable against the change in $(\kappa_1,\kappa_2)$. 
For the \texttt{port5} dataset, CPA${}_+$ finished the computations in 104.5 s, 97.8 s, and 125.7 s for $(\kappa_1,\kappa_2)\in \{(0.5,2), (1,4), (2,8)\}$, respectively. 
In addition, for the \texttt{s\&p500} dataset, optimality gaps given from CPA${}_+$ were 12.8\%, 15.3\%, and 18.3\% for $(\kappa_1,\kappa_2)\in \{(0.5,2), (1,4), (2,8)\}$, respectively. 
These results suggest that the computational performance of CPA${}_+$ is not greatly affected by the ambiguity parameters $(\kappa_1,\kappa_2)$.

\begin{table}[p]
    \centering
    \caption{Numerical results with $k=10$ and $\gamma=10/\sqrt{N}$ for\\ $(\kappa_1,\kappa_2)\in \{(0.5,2),(1,4),(2,8)\}$}
         \label{tab:result_kappa}
     \footnotesize
      \renewcommand{\arraystretch}{0.90}
\begin{threeparttable}
\begin{tabular}{lrrlrrrrrr}
\toprule
Data &$N$& $(\kappa_1,\kappa_2)$ 	&Method	 &Obj	 &Gap(\%)	&Time 	&\#Cuts 	&\#Nodes\\ 
\hline
\texttt{ind49}	&49	&(0.5,2)	&SCIP	&1.946	&0.0	&704.0	&---	&23\\
	&	&	&CPA	&1.946	&0.0	&493.0	&22	&20\\
	&	&	&CPA${}_+$	&1.946	&0.0	&\textbf{11.8}	&22	&20\\ \cline{3-9} 
	&	&(1,4)	&SCIP	&3.034	&0.0	&663.2	&---	&24\\
	&	&	&CPA	&3.034	&0.0	&469.6	&22	&20\\
	&	&	&CPA${}_+$	&3.034	&0.0	&\textbf{12.0}	&22	&20\\ \cline{3-9} 
	&	&(2,8)	&SCIP	&$\infty$\tnote{$\dagger$}	&0.0	&5.3	&---	&1\\
	&	&	&CPA	&4.587	&0.0	&386.0	&18	&9\\
	&	&	&CPA${}_+$	&4.588	&0.0	&\textbf{9.9}	&18	&8\\ \hline 
 \texttt{port2}	&89	&(0.5,2)	&SCIP	&1.428	&24.0	&$>$3600	&---&$>$15\\
	&	&	&CPA	&1.336	&134.4	&$>$3600	&$>$30	&$>$51\\
	&	&	&CPA${}_+$	&\textbf{1.155}	&37.0	&$>$3600	&$>$6546	&$>$118147\\ \cline{3-9} 
	&	&(1,4)	&SCIP	&1.865	&13.8	&$>$3600	&---&$>$17\\
	&	&	&CPA	&1.924	&160.2	&$>$3600	&$>$28	&$>$91\\
	&	&	&CPA${}_+$	&\textbf{1.726}	&55.2	&$>$3600	&$>$6380	&$>$110529\\ \cline{3-9} 
	&	&(2,8)	&SCIP	&2.856	&18.0	&$>$3600	&---&$>$15\\
	&	&	&CPA	&2.766	&242.4	&$>$3600	&$>$28	&$>$71\\
	&	&	&CPA${}_+$	&\textbf{2.485}	&69.4	&$>$3600	&$>$6204	&$>$93726\\ \hline 
 \texttt{sbm100}	&100	&(0.5,2)	&SCIP	&2.612	&1.4	&$>$3600	&---&$>$7\\
	&	&	&CPA	&2.575	&0.0	&1549.4	&5	&1\\
	&	&	&CPA${}_+$	&2.575	&0.0	&\textbf{2.4}	&5	&1\\ \cline{3-9} 
	&	&(1,4)	&SCIP	&3.970	&0.9	&$>$3600	&---&$>$9\\
	&	&	&CPA	&3.935	&0.0	&1491.5	&5	&1\\
	&	&	&CPA${}_+$	&3.935	&0.0	&\textbf{2.6}	&5	&1\\ \cline{3-9} 
	&	&(2,8)	&SCIP	&5.910	&0.6	&$>$3600	&---&$>$5\\
	&	&	&CPA	&5.874	&0.0	&2077.2	&6	&1\\
	    &	&	&CPA${}_+$	&5.874	&0.0	&\textbf{2.5}	&5	&1\\ \hline 
 \texttt{port5}	&225	&(0.5,2)	&SCIP	&$\infty$	&100.0	&$>$3600	&---&$>$1\\
		    &   &   &CPA  & OM &--- &---&---&---\\
		    &	&	&CPA${}_+$	&1.915	&0.0	&\textbf{104.5}	&195	&2175\\ \cline{3-9} 
	&	&(1,4)	&SCIP	&$\infty$	&100.0	&$>$3600	&---&$>$1\\
		    &   &   &CPA  & OM &--- &---&---&---\\
		    &	&	&CPA${}_+$	&2.687	&0.0	&\textbf{97.8}	&178	&1751\\ \cline{3-9} 
	&	&(2,8)	&SCIP	&$\infty$	&100.0	&$>$3600	&---&$>$1\\
		    &   &   &CPA  & OM &--- &---&---&---\\
		    &	&	&CPA${}_+$	&3.776	&0.0	&\textbf{125.7}	&215	&2782\\ \hline 
 \texttt{s\&p500}	&468	&(0.5,2)	&SCIP	&$\infty$	&---	&$>$3600	&---&$>$1\\
		    &   &   &CPA  & OM &--- &---&---&---\\
		    &	&	&CPA${}_+$	&\textbf{0.650}	&12.8	&$>$3600	&$>$4978	&$>$396321\\ \cline{3-9} 
	&	&(1,4)	&SCIP	&$\infty$	&---	&$>$3600	&---&$>$1\\
		    &   &   &CPA  & OM &--- &---&---&---\\
		    &	&	&CPA${}_+$	&\textbf{0.888}	&15.3	&$>$3600	&$>$4962	&$>$398075\\ \cline{3-9} 
	&	&(2,8)	&SCIP	&$\infty$	&---	&$>$3600	&---&$>$1\\
		    &   &   &CPA  & OM &--- &---&---&---\\
		    &	&	&CPA${}_+$	&\textbf{1.231}	&18.3	&$>$3600	&$>$5285	&$>$304218\\ 
	\bottomrule
\end{tabular}
	 \begin{tablenotes}\footnotesize
    \item[$\dagger$] SCIP incorrectly determined that the problem instance was infeasible. 
    \end{tablenotes}
\end{threeparttable}
\end{table}

\subsection*{Results for different values of $\ell_2$-regularization parameter $\gamma$}
\Cref{tab:result_gamma} gives the numerical results of each method for the $\ell_2$-regularization parameter $\gamma\in \{1/\sqrt{N}, 10/\sqrt{N},100/\sqrt{N}\}$. 
We set $k=10$ for the cardinality constraint and $(\kappa_1,\kappa_2)=(1,4)$ for the ambiguity set.

\Cref{tab:result_gamma} shows that CPA${}_+$ performed very well compared with the other methods.
CPA${}_+$ was always faster than the other methods for the \texttt{ind49}, \texttt{sbm100}, and \texttt{port5} datasets. 
For the \texttt{port2} dataset, all the three methods failed to solve problem instances to optimality, but CPA${}_+$ found solutions of better quality than the other methods for all $\gamma \in \{1/\sqrt{N}, 10/\sqrt{N},100/\sqrt{N}\}$.  

The computation time of CPA${}_+$ tended to be shorter for smaller $\gamma$. 
For the \texttt{port5} dataset, CPA${}_+$ finished the computations in 98.2 s, 97.8 s, and 724.5 s for $\gamma\in \{1/\sqrt{N}, 10/\sqrt{N},100/\sqrt{N}\}$, respectively.
A similar tendency was also shown from the results for the \texttt{s\&p500} dataset, where the optimality gaps obtained by CPA${}_+$ were 4.9\%, 15.3\%, and 73.7\% for $\gamma\in \{1/\sqrt{N}, 10/\sqrt{N},100/\sqrt{N}\}$, respectively.

\begin{table}[p]
    \centering
    \caption{Numerical results with $k=10$ and $(\kappa_1,\kappa_2)=(1,4)$ for\\ $\gamma\in \{1/\sqrt{N}, 10/\sqrt{N}, 100/\sqrt{N}\}$}
         \label{tab:result_gamma}
     \footnotesize
      \renewcommand{\arraystretch}{0.90}
\begin{tabular}{lrrlrrrrr}
\toprule
Data &$N$& $\gamma$ 	&Method	 &Obj	 &Gap(\%)	&Time 	&\#Cuts 	&\#Nodes\\ \hline
\texttt{ind49}	&49	&1$/\sqrt{N}$	&SCIP	&3.415	&0.0	&1132.5	&---	&48\\
	&	&	&CPA	&3.415	&0.0	&434.6	&20	&20\\
	&	&	&CPA${}_+$	&3.415	&0.0	&\textbf{11.5}	&20	&20\\ \cline{3-9} 
	&	&10$/\sqrt{N}$	&SCIP	&3.034	&0.0	&663.2	&---	&24\\
	&	&	&CPA	&3.034	&0.0	&469.6	&22	&20\\
	&	&	&CPA${}_+$	&3.034	&0.0	&\textbf{12.0}	&22	&20\\ \cline{3-9} 
	&	&100$/\sqrt{N}$	&SCIP	&2.984	&0.0	&706.3	&---	&26\\
	&	&	&CPA	&2.984	&0.0	&481.0	&21	&36\\
	&	&	&CPA${}_+$	&2.984	&0.0	&\textbf{11.4}	&21	&39\\ \hline 
 \texttt{port2}	&89	&1$/\sqrt{N}$	&SCIP	&2.675	&32.0	&$>$3600	&---&$>$10\\
	&	&	&CPA	&2.402	&46.4	&$>$3600	&$>$24	&$>$291\\
	&	&	&CPA${}_+$	&\textbf{2.160}	&7.1	&$>$3600	&$>$6359	&$>$156591\\ \cline{3-9} 
	&	&10$/\sqrt{N}$	&SCIP	&1.865	&13.8	&$>$3600	&---&$>$17\\
	&	&	&CPA	&1.924	&160.2	&$>$3600	&$>$28	&$>$91\\
	&	&	&CPA${}_+$	&\textbf{1.726}	&55.2	&$>$3600	&$>$6380	&$>$110529\\ \cline{3-9} 
	&	&100$/\sqrt{N}$	&SCIP	&1.796	&12.1	&$>$3600	&---&$>$18\\
	&	&	&CPA	&1.847	&3528.8	&$>$3600	&$>$30	&$>$533\\
	&	&	&CPA${}_+$	&\textbf{1.675}	&506.7	&$>$3600	&$>$6496	&$>$60165\\ \hline 
 \texttt{sbm100}	&100	&1$/\sqrt{N}$	&SCIP	&6.323	&27.8	&$>$3600	&---&$>$5\\
	&	&	&CPA	&4.594	&0.0	&2186.2	&7	&1\\
	&	&	&CPA${}_+$	&4.594	&0.0	&\textbf{3.6}	&7	&1\\ \cline{3-9} 
	&	&10$/\sqrt{N}$	&SCIP	&3.970	&0.9	&$>$3600	&---&$>$9\\
	&	&	&CPA	&3.935	&0.0	&1491.5	&5	&1\\
	&	&	&CPA${}_+$	&3.935	&0.0	&\textbf{2.6}	&5	&1\\ \cline{3-9} 
	&	&100$/\sqrt{N}$	&SCIP	&3.806	&0.1	&$>$3600	&---&$>$9\\
	&	&	&CPA	&3.801	&0.0	&1941.4	&6	&1\\
	&	&	&CPA${}_+$	&3.801	&0.0	&\textbf{2.5}	&5	&1\\ \hline 
 \texttt{port5}	&225	&1$/\sqrt{N}$	&SCIP	&$\infty$	&100.0	&$>$3600	&---&$>$1\\
	    &   &   &CPA  & OM &--- &---&---&---\\
	    &	&	&CPA${}_+$	&3.380	&0.0	&\textbf{98.2}	&179	&2260\\ \cline{3-9} 
	&	&10$/\sqrt{N}$	&SCIP	&$\infty$	&100.0	&$>$3600	&---&$>$1\\
	    &   &   &CPA  & OM &--- &---&---&---\\
	    &	&	&CPA${}_+$	&2.687	&0.0	&\textbf{97.8}	&178	&1751\\ \cline{3-9} 
	&	&100$/\sqrt{N}$	&SCIP	&$\infty$	&100.0	&$>$3600	&---&$>$1\\
	    &   &   &CPA  & OM &--- &---&---&---\\
	    &	&	&CPA${}_+$	&2.611	&0.0	&\textbf{724.5}	&1318	&21438\\ \hline 
 \texttt{s\&p500}	&468	&1$/\sqrt{N}$	&SCIP	&$\infty$	&---	&$>$3600	&---&$>$1\\
		    &   &   &CPA  & OM &--- &---&---&---\\
		    &	&	&CPA${}_+$	&\textbf{1.875}	&4.9	&$>$3600	&$>$4949	&$>$431321\\ \cline{3-9} 
	&	&10$/\sqrt{N}$	&SCIP	&$\infty$	&---	&$>$3600	&---&$>$1\\
	    &   &   &CPA  & OM &--- &---&---&---\\
	    &	&	&CPA${}_+$	&\textbf{0.888}	&15.3	&$>$3600	&$>$4962	&$>$398075\\ \cline{3-9} 
	&	&100$/\sqrt{N}$	&SCIP	&$\infty$	&---	&$>$3600	&---&$>$1\\
		    &   &   &CPA  & OM &--- &---&---&---\\
		    &	&	&CPA${}_+$	&\textbf{0.781}	&73.7	&$>$3600	&$>$5736	&$>$145328\\   
	\bottomrule
\end{tabular}
\end{table}

\section{Results of optimal investment weights}\label{apd:d}
\Cref{fig:stacked_barplot_dr_sparse,fig:stacked_barplot_dr_dense,fig:stacked_barplot_ro_sparse,fig:stacked_barplot_ro_dense,fig:stacked_barplot_ind49_dr_sparse,fig:stacked_barplot_ind49_dr_dense,fig:stacked_barplot_ind49_ro_sparse,fig:stacked_barplot_ind49_ro_dense,fig:stacked_barplot_sbm100_dr_sparse,fig:stacked_barplot_sbm100_dr_dense,fig:stacked_barplot_sbm100_ro_sparse,fig:stacked_barplot_sbm100_ro_dense} show the stacked bar plots of optimal investment weights given by the DR and RO models with $k\in \{5,N\}$  for the three datasets (\texttt{nikkei225}, \texttt{ind49}, and \texttt{sbm100}) in \Cref{sec:exp_oos}. 
From \Cref{fig:stacked_barplot_dr_sparse,fig:stacked_barplot_ind49_dr_sparse,fig:stacked_barplot_sbm100_dr_sparse}, we see that the optimal portfolios obtained by the DR model with $k=5$ are moderately diversified for each dataset.

\pgfplotsset{/pgfplots/bar cycle list/.style={/pgfplots/cycle list name={customlist}}}

\begin{figure}[p]
    \centering
\includegraphics[page=1]{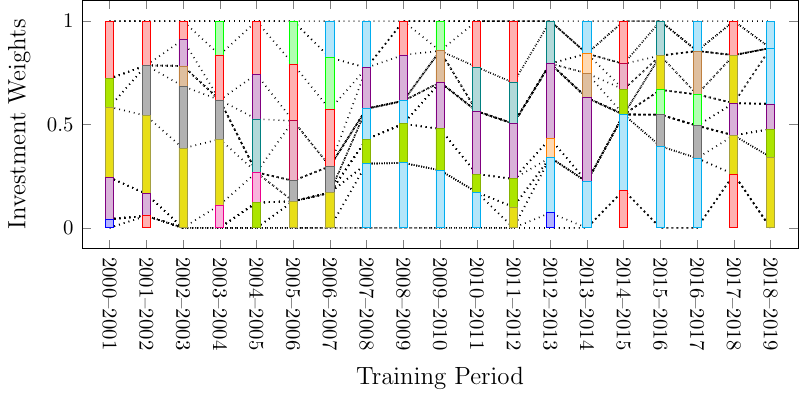} 
\caption{Optimal investment weights provided by the DR model with $k=5$ for the \texttt{nikkei225} dataset}
\label{fig:stacked_barplot_dr_sparse}
\end{figure}

\begin{figure}
    \centering
\includegraphics[page=2]{nikkei225.pdf} 
\caption{Optimal investment weights provided by the DR model with $k=N$ for the \texttt{nikkei225} dataset}
\label{fig:stacked_barplot_dr_dense}
\end{figure}

\begin{figure}
    \centering
\includegraphics[page=3]{nikkei225.pdf} 
\caption{Optimal investment weights provided by the RO model with $k=5$ for the \texttt{nikkei225} dataset}
\label{fig:stacked_barplot_ro_sparse}
\end{figure}

\begin{figure}
    \centering
\includegraphics[page=4]{nikkei225.pdf} 
\caption{Optimal investment weights provided by the RO model with $k=N$ for the  \texttt{nikkei225} dataset}
\label{fig:stacked_barplot_ro_dense}
\end{figure}

\begin{figure}[p]
    \centering
\includegraphics[page=1]{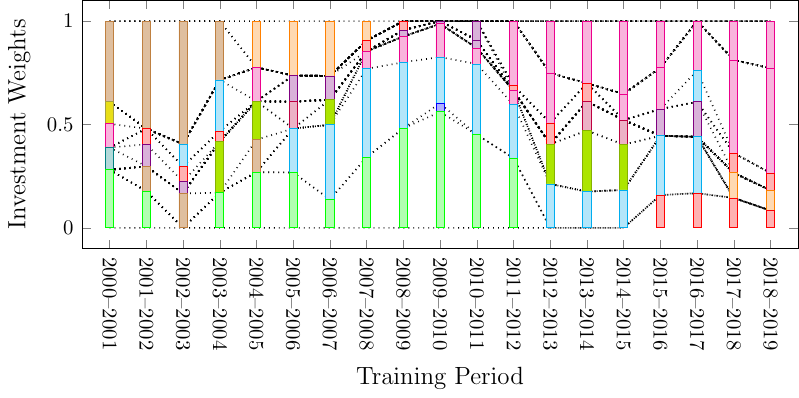} 
\caption{Optimal investment weights provided by the DR model with $k=5$ for the \texttt{ind49} dataset}
\label{fig:stacked_barplot_ind49_dr_sparse}
\end{figure}

\begin{figure}
    \centering
\includegraphics[page=2]{ind49.pdf} 
\caption{Optimal investment weights provided by the DR model with $k=N$ for the \texttt{ind49} dataset}
\label{fig:stacked_barplot_ind49_dr_dense}
\end{figure}

\begin{figure}
    \centering
\includegraphics[page=3]{ind49.pdf} 
\caption{Optimal investment weights provided by the RO model with $k=5$ for the \texttt{ind49} dataset}
\label{fig:stacked_barplot_ind49_ro_sparse}
\end{figure}

\begin{figure}
    \centering
\includegraphics[page=4]{ind49.pdf} 
\caption{Optimal investment weights provided by the RO model with $k=N$ for the  \texttt{ind49} dataset}
\label{fig:stacked_barplot_ind49_ro_dense}
\end{figure}

\begin{figure}[p]
    \centering
\includegraphics[page=1]{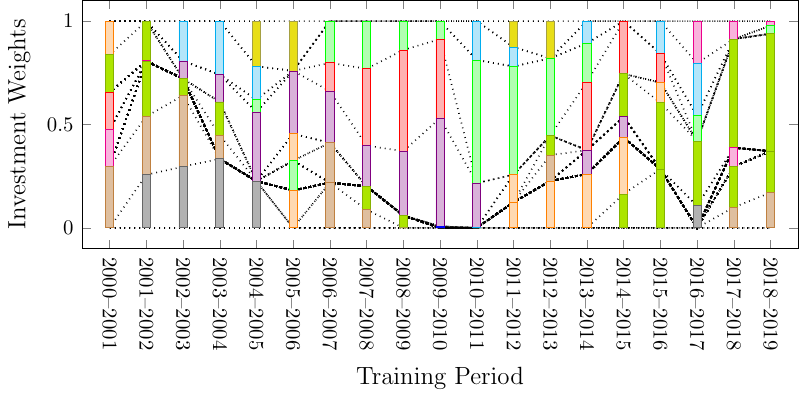} 
\caption{Optimal investment weights provided by the DR model with $k=5$ for the \texttt{sbm100} dataset}
\label{fig:stacked_barplot_sbm100_dr_sparse}
\end{figure}

\begin{figure}
    \centering
\includegraphics[page=2]{sbm100.pdf} 
\caption{Optimal investment weights provided by the DR model with $k=N$ for the \texttt{sbm100} dataset}
\label{fig:stacked_barplot_sbm100_dr_dense}
\end{figure}

\begin{figure}
    \centering
\includegraphics[page=3]{sbm100.pdf} 
\caption{Optimal investment weights provided by the RO model with $k=5$ for the \texttt{sbm100} dataset}
\label{fig:stacked_barplot_sbm100_ro_sparse}
\end{figure}

\begin{figure}
    \centering
\includegraphics[page=4]{sbm100.pdf} 
\caption{Optimal investment weights provided by the RO model with $k=N$ for the  \texttt{sbm100} dataset}
\label{fig:stacked_barplot_sbm100_ro_dense}
\end{figure}

\end{document}